\DeclareSymbolFont{AMSb}{U}{msb}{m}{n}
\DeclareMathSymbol{\Z}{\mathbin}{AMSb}{"5A}
\DeclareMathSymbol{\R}{\mathbin}{AMSb}{"52}
\DeclareMathSymbol{\N}{\mathbin}{AMSb}{"4E}
\DeclareMathSymbol{\Q}{\mathbin}{AMSb}{"51}
\newcommand{\C}{\mathfrak{C}}
\newcommand{\tp}{\textup{tp}}
\newcommand{\stp}{\textup{stp}}
\newcommand{\acl}{\textup{acl}}
\newcommand{\Th}{\textup{Th}}
\newcommand{\dom}{\textup{dom}}
\newcommand{\Aut}{\textup{Aut}}
\newcommand{\cl}{\textup{cl}}
\newcommand{\coker}{\textup{coker}}
\newcommand{\id}{\textup{id}}
\newcommand{\G}{\overline{G}}
\def\Ind{\setbox0=\hbox{$x$}\kern\wd0\hbox to 0pt{\hss$\mid$\hss}
\lower.9\ht0\hbox to 0pt{\hss$\smile$\hss}\kern\wd0}
\def\Notind{\setbox0=\hbox{$x$}\kern\wd0\hbox to 0pt{\mathchardef
\nn=12854\hss$\nn$\kern1.4\wd0\hss}\hbox to
0pt{\hss$\mid$\hss}\lower.9\ht0 \hbox to
0pt{\hss$\smile$\hss}\kern\wd0}
\def\ind{\mathop{\mathpalette\Ind{}}}
\def\nind{\mathop{\mathpalette\Notind{}}}
\newtheorem{thm}{Theorem}[section]
\newtheorem{lem}[thm]{Lemma}
\newtheorem{cor}[thm]{Corollary}
\newtheorem{prop}[thm]{Proposition}
\newtheorem{fact}[thm]{Fact}
\newtheorem{quest}[thm]{Question}
\theoremstyle{definition}
\newtheorem{definition}[thm]{Definition}
\theoremstyle{remark}
\newtheorem{remark}[thm]{Remark}
\theoremstyle{remark}
\newtheorem{example}[thm]{Example}
\theoremstyle{remark}
\newtheorem{claim}[thm]{Claim}
\theoremstyle{remark}
\theoremstyle{remark}
\begin{document}
\bibliographystyle{amsplain}

\title{The Schr\"{o}der-Bernstein property\\for weakly minimal theories}
\author{John Goodrick and Michael C. Laskowski}

\maketitle

\begin{abstract}
For a countable, weakly minimal theory $T$, we show that the Schr\"{o}der-Bernstein property (any two elementarily bi-embeddable models are isomorphic) is equivalent to each of the following:

1. For any $U$-rank-$1$ type $q \in S(\acl^{eq}(\emptyset))$ and any automorphism $f$ of the monster model $\mathfrak{C}$, there is some $n < \omega$ such that $f^n(q)$ is not almost orthogonal to $q \otimes f(q) \otimes \ldots \otimes f^{n-1}(q)$;

2. $T$ has no infinite collection of models which are pairwise elementarily bi-embeddable but pairwise nonisomorphic.

We conclude that for countable, weakly minimal theories, the Schr\"{o}der-Bernstein property is absolute between transitve models of ZFC.
\end{abstract}

\section{Introduction}

We are concerned the following property of a first-order theory $T$:

\begin{definition}
A theory $T$ has the \emph{Schr\"{o}der-Bernstein property}, or the \emph{SB~property}, if any two elementarily bi-embeddable models of $T$ are isomorphic.
\end{definition}

Our motivation is to find some nice model-theoretic characterization of the class of complete theories with the SB~property.  This property was first studied in the 1980's by Nurmagambetov in \cite{nur2} and \cite{nur1} (mainly within the class of $\omega$-stable theories).  In \cite{nur1}, he showed:

\begin{thm}
\label{ttrans}
If $T$ is $\omega$-stable, then $T$ has the SB~property if and only if $T$ is nonmultidimensional.
\end{thm}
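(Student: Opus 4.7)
The plan is to leverage Shelah's classification of $\omega$-stable nonmultidimensional theories by cardinal dimension invariants. Recall that for such $T$, every model $M$ is determined up to isomorphism by the function $q \mapsto \dim(q, M)$, where $q$ ranges over a set $R$ of representatives of nonorthogonality classes of regular types over $\acl^{eq}(\emptyset)$; nonmultidimensionality says precisely that every regular type in $\mathfrak{C}$ is nonorthogonal to some member of $R$, so this dimension data is complete. With this in hand, the easy direction ($\Leftarrow$) is immediate: any elementary embedding $f\colon M \hookrightarrow N$ carries a Morley sequence in $q$ inside $M$ to a Morley sequence in $q$ inside $N$, so $\dim(q,M) \leq \dim(q,N)$ for each $q \in R$. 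Bi-embeddability thus forces $\dim(q,M) = \dim(q,N)$ for every $q \in R$, and the classification yields $M \cong N$.

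For the hard direction ($\Rightarrow$), I would argue the contrapositive. Assume $T$ is multidimensional, so there is a regular type $p$ over a small base $A$ that is orthogonal to $\acl^{eq}(\emptyset)$. First extract a family $\{p_n : n < \omega\}$ of pairwise orthogonal regular types together with an automorphism $\sigma \in \Aut(\mathfrak{C})$ satisfying $\sigma(p_n) = p_{n+1}$, by taking an independent sequence $(A_n)_{n<\omega}$ of conjugates of $A$ over $\acl^{eq}(\emptyset)$ and transporting $p$ along them. Next, choose a strictly increasing sequence of infinite cardinals $(\kappa_n)_{n < \omega}$ and build models $M$ and $N$ as prime models over independent Morley sequences so that $\dim(p_n,M) = \kappa_n$ and $\dim(p_n,N) = \kappa_{n+1}$, with equal dimensions on all other regular types. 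An inclusion of Morley sequences then extends to an elementary embedding $M \hookrightarrow N$, while in the other direction $N$ embeds into $\sigma^{-1}(M)$; composing with the isomorphism $\sigma \colon \sigma^{-1}(M) \to M$ (the restriction of an automorphism of $\mathfrak{C}$) yields an elementary embedding $N \hookrightarrow M$. But $M \not\cong N$ because their dimension functions differ, e.g.\ $\dim(p_0,M) = \kappa_0 < \kappa_1 = \dim(p_0,N)$.

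The main obstacle is producing the family $(p_n)$ together with the shifting automorphism $\sigma$. Orthogonality of $p$ to $\acl^{eq}(\emptyset)$ is the essential input: it guarantees that $p$ transported along the independent sequence $(A_n)$ yields pairwise orthogonal types (the nonforking extension of $p$ to $A \cup A_n$ being orthogonal to every type over $A_n$), and homogeneity of $\mathfrak{C}$ then supplies the single automorphism $\sigma$ cyclically permuting them. The subsequent construction of the prime models $M$ and $N$ with the prescribed dimensions, and the verification that the maps above are elementary embeddings, are routine applications of the uniqueness of prime models over independent sets of realizations of pairwise orthogonal regular types in the $\omega$-stable setting.
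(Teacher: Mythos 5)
The paper does not prove this statement; it is quoted from Nurmagambetov's \cite{nur1} as a known result, so there is no in-paper proof to compare against. Judged on its own, your outline captures the right shape of an argument but has a genuine gap in the easy direction.

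The problem is in the $\Leftarrow$ direction. You write that an elementary embedding $f\colon M\hookrightarrow N$ ``carries a Morley sequence in $q$ inside $M$ to a Morley sequence in $q$ inside $N$, so $\dim(q,M)\leq\dim(q,N)$.'' That is not what $f$ does: $f$ carries a Morley sequence in $q$ to a Morley sequence in $f(q)$, and $f(q)$ is in general a different type over $\acl^{eq}(\emptyset)$, since $f$ restricts to a nontrivial permutation of $\acl^{eq}(\emptyset)$. All you get directly is $\dim(q,M)\leq\dim(f(q),N)$, which is not enough by itself. This is not a minor slip --- the possibility that an automorphism moves $q$ along an infinite ``independent'' orbit is precisely the obstruction the entire rest of the paper (condition~2 of Theorem~\ref{wm_SB}, the notion of nomadic type) is about. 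What saves the $\omega$-stable case is a special finiteness phenomenon that you need to invoke explicitly: for a totally transcendental theory, a regular $q\in S(\acl^{eq}(\emptyset))$ has canonical base a finite tuple $e\in\acl^{eq}(\emptyset)$, $\tp(e/\emptyset)$ has only finitely many realizations, and $q\upharpoonright e$ has finite multiplicity (bounded by its finite Morley degree). Hence any automorphism has a finite orbit on $q$. Given that, one iterates: for bi-embeddings $f,g$ the chain $\dim(q,M)\leq\dim(f(q),N)\leq\dim(gf(q),M)\leq\cdots$ eventually cycles, forcing equality and giving $\dim(q,M)=\dim(f(q),N)$ for all $q$; combined with the structure theorem for t.t.\ nonmultidimensional theories (and the fact that nmd implies NDOP) this yields $M\cong N$. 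Without the finite-orbit observation, the argument does not close.

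The $\Rightarrow$ direction is essentially sound and is close in spirit to the paper's Theorem~\ref{nomadic_types}. Two details that need care: (i) with a single forward shift $\sigma$ and one index $n$, the model $\sigma^{-1}(M)$ picks up a nontrivial dimension in $p_{-1}:=\sigma^{-1}(p_0)$, which your target $N$ does not control; this is why the paper uses a two-index array $I^j_i$ of Morley sequences of lengths $\aleph_{i+j+1}$ rather than a single sequence of prescribed dimensions. (ii) When you take an $a$-prime model over independent Morley sequences, the realized dimension can exceed the length of the sequence you put in; establishing $\dim(p_n,M)=\kappa_n$ (rather than $\geq$) requires an argument like the paper's Claim~4.9/4.11. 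Both issues are repairable with the usual bookkeeping, but as written the construction is not watertight.
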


One of the results from the thesis of the first author (\cite{my_thesis}) was:

\begin{thm}
If a countable complete theory $T$ has the SB~property, then $T$ is superstable, nonmultidimensional, and NOTOP, and $T$ has no \emph{nomadic types}; that is, there is no type $p \in S(M)$ such that there is an automorphism $f \in \Aut(M)$ for which the types $\left\{f^n(p) : n \in \N \right\}$ are pairwise orthogonal.
\end{thm}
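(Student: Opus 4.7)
The plan is to proceed by contrapositive, treating each of the four conclusions in turn. For each condition, I assume it fails and produce two elementarily bi-embeddable but non-isomorphic models of $T$, contradicting SB.

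The non-superstable, multidimensional, and OTOP cases I would handle via the standard Shelah-style non-structure machinery. In each case one has a coding of some external structure (a tree of forking types, a proper class of pairwise orthogonal regular types, or a graph, respectively) into models of $T$, in a manner that makes the coded invariant recoverable from the model up to isomorphism while remaining compatible with elementary embeddings. Applying the coding to a pair of bi-embeddable but non-isomorphic witnesses on the external side -- for example two linear orders $I \not\cong J$ with $I \hookrightarrow J$ and $J \hookrightarrow I$ -- yields the required counterexample on the model side.

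The substantive new case, and the one I expect to be the main obstacle, is the absence of nomadic types. Suppose $p \in S(M)$ is nomadic via $f \in \Aut(M)$, and extend $f$ to $\hat f \in \Aut(\mathfrak{C})$. Setting $q_n := f^n(p)$, the family $\{q_n : n \in \Z\}$ is pairwise orthogonal (the extension from $\N$ to $\Z$ is by applying $f^{-1}$), and the common dimension $\kappa := \dim_M(q_n)$ is independent of $n$ since $f \in \Aut(M)$. For every sequence $\vec\lambda = (\lambda_n)_{n \in \Z}$ of cardinals with $\lambda_n \geq \kappa$, construct a prime model $N(\vec\lambda) \supseteq M$ containing an independent family of exactly $\lambda_n$ realizations of $q_n$ for every $n$. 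Using the already-established superstability and nonmultidimensionality (together with uniqueness of prime models), the isomorphism type of $N(\vec\lambda)$ as an abstract structure is determined by the $\Z$-orbit of $\vec\lambda$ under the shift induced by $\hat f$.

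To finish, I would exhibit $\vec\lambda, \vec\mu$ with distinct shift-orbits but which embed into each other by suitable powers of $\hat f$. One concrete choice is $\lambda_n = \kappa^+$ for $n \geq 0$, $\lambda_n = \kappa$ for $n < 0$, against the non-monotonic $\mu_n = \kappa^+$ for $n \geq 0$, $\mu_{-1} = \kappa^+$, $\mu_{-2} = \kappa$, $\mu_{-3} = \kappa^+$, and $\mu_n = \kappa$ for $n \leq -4$. Inclusion supplies $N(\vec\lambda) \hookrightarrow N(\vec\mu)$, while $\hat f^3$ supplies the reverse embedding by sweeping the two exceptional $\kappa^+$-coordinates of $\vec\mu$ into the $\kappa^+$-region of $\vec\lambda$; non-isomorphism follows because $\vec\lambda$ is monotonic in $n$ while $\vec\mu$ is not, so the two lie in distinct shift-orbits. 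The delicate technical point is justifying that the shift-orbit really is a complete isomorphism invariant of $N(\vec\lambda)$, i.e.\ that any isomorphism $N(\vec\lambda) \to N(\vec\mu)$ must act on the $q_n$-skeleton through some $\hat f^k$; this is where one invokes the earlier parts (superstability, nonmultidimensionality, and NOTOP) to control the possible elementary embeddings $M \to N(\vec\mu)$, motivating the order in which I would attack the four consequences.
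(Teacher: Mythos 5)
The theorem you are asked to prove is cited by the paper from the first author's thesis, and the paper itself supplies a full argument only for the ``no nomadic types'' part (Theorem~\ref{nomadic_types}); the other three conclusions are indeed standard Shelah-style non-structure results, and your one-paragraph sketch of them is the expected level of detail. So the substantive comparison is on the nomadic case, where your construction has a genuine gap.

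Your plan is to build $a$-prime models $N(\vec\lambda)$ over $\Z$-indexed independent Morley sequences in the conjugates $q_n = f^n(p)$, and to distinguish $N(\vec\lambda)$ from $N(\vec\mu)$ by the shift-orbit of the dimension sequence. But the types $q_n$ are all conjugate over $\acl^{eq}(\emptyset)$ (by powers of $\hat f$), so there is no a priori reason an isomorphism $N(\vec\lambda)\to N(\vec\mu)$ must carry $q_n$ to $q_{n+k}$ for a fixed $k$, or even respect the $\Z$-indexing at all; it only has to transport the family of dimensions. Your chosen $\vec\lambda$ and $\vec\mu$ produce the \emph{same} multiset of dimensions (countably many coordinates equal to $\kappa$ and countably many equal to $\kappa^+$), so a multiset-invariant cannot separate them either, and the ``delicate technical point'' you defer to --- that any isomorphism acts on the $q_n$-skeleton via a power of $\hat f$ --- is precisely the thing you would need to prove, and it is not a consequence of superstability, nonmultidimensionality, and NOTOP. (There is also a secondary issue: if $\kappa=\dim_M(q_n)\le\aleph_0$, the $a$-prime construction does not control $\dim(q_n, N(\vec\lambda))$ exactly, since passing to an $a$-prime model can change countable dimensions by up to $\aleph_0$.)

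The paper's Theorem~\ref{nomadic_types} sidesteps both problems by using a one-sided family $\{p_i : i<\omega\}$ and a \emph{strictly increasing} sequence of uncountable cardinals: $M_j$ is $a$-prime over $B\cup\bigcup_i I^j_i$ where $|I^j_i|=\aleph_{i+j+1}$. Bi-embeddability is by inclusion together with powers of $f$, as in your plan. The non-isomorphism argument is much cheaper than a complete-invariant claim: one proves that in $M_j$, every regular stationary type over a countable subset has dimension either $\le\aleph_0$ or $\ge\aleph_{j+1}$ (using $a$-primeness over the base plus the orthogonality assumption), and that $\dim(p,M_j)=\aleph_{j+1}$ exactly. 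The least uncountable regular dimension achieved is then a valid invariant that immediately separates the $M_j$, with no need to control how an isomorphism permutes the $p_i$. If you want your $\Z$-indexed variant to go through, you should likewise choose dimensions so that some coarse cardinal statistic (e.g.\ the infimum of uncountable dimensions, or the order type of the set of achieved uncountable dimensions) distinguishes $N(\vec\lambda)$ from $N(\vec\mu)$; the step-function choice $\kappa,\kappa^+$ does not do this.
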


In particular, any countable theory with the SB~property must be classifiable (in the sense of Shelah).  Within classifiable theories, the SB~property seems to form a new dividing line distinct from the usual dichotomies in stability theory.

In this note, we investigate the SB~property for weakly~minimal theories (that is, theories in which the formula ``$x=x$'' is weakly minimal).  We prove the following characterization, confirming a special case of a conjecture of the first author (from \cite{my_thesis}):

\begin{thm}
\label{wm_SB}
If $T$ is countable and weakly minimal, then the following are equivalent:

1. $T$ has the SB~property.

2. For any $U$-rank-$1$ type $q \in S(\acl^{eq}(\emptyset))$ and any automorphism $f$ of the monster model $\mathfrak{C}$, there is some $n < \omega$ such that $f^n(q)$ is not almost orthogonal to $q \otimes f(q) \otimes \ldots \otimes f^{n-1}(q)$.

3. $T$ has no infinite collection of models which are pairwise elementarily bi-embeddable but pairwise nonisomorphic.
\end{thm}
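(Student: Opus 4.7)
The plan is to establish the cycle of implications $(1) \Rightarrow (3) \Rightarrow (2) \Rightarrow (1)$. The first link is immediate from the definition of SB: if $T$ has the SB property, then any two elementarily bi-embeddable models are isomorphic, so no infinite family of pairwise bi-embeddable pairwise nonisomorphic models can exist.

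For $(3) \Rightarrow (2)$, I would argue by contrapositive. Suppose (2) fails, witnessed by $q \in S(\acl^{eq}(\emptyset))$ of $U$-rank $1$ and $f \in \Aut(\mathfrak{C})$ with $f^n(q)$ almost orthogonal to $q \otimes f(q) \otimes \cdots \otimes f^{n-1}(q)$ for every $n \geq 1$. Since $f \in \Aut(\mathfrak{C})$ and almost orthogonality is automorphism-invariant, the entire two-sided family $\{f^n(q) : n \in \Z\}$ is pairwise almost orthogonal. To exploit this, I would select a countable $f$-invariant elementary substructure $M_0 \prec \mathfrak{C}$ containing (names for) $\acl^{eq}(\emptyset)$, and let $p \in S(M_0)$ be the nonforking extension of $q$. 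Because weakly minimal stationary types are almost orthogonal iff orthogonal, the types $\{(f{\upharpoonright}M_0)^n(p) : n \in \Z\}$ are pairwise orthogonal, making $p$ a nomadic type over $M_0$ in the sense of the second displayed theorem above. The construction underlying that theorem — already present in the first author's thesis — then produces an infinite family of pairwise bi-embeddable pairwise nonisomorphic models (e.g., indexed by suitable subsets of $\Z$ on which the assigned dimension is $\aleph_1$ rather than $\aleph_0$, with mutual embeddings witnessed by powers of $f$ and non-isomorphism distinguished by fine invariants of the dimension profile over $M_0$), contradicting (3).

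For $(2) \Rightarrow (1)$, I would use Buechler's structure theory for weakly minimal theories: every model of $T$ is prime and minimal over an independent set of realizations of $U$-rank-$1$ types over $\acl^{eq}(\emptyset)$, and so is determined up to isomorphism by a cardinal-valued dimension function on these types. Given bi-embeddable $M, N$ with witnessing embeddings $h: M \hookrightarrow N$ and $g: N \hookrightarrow M$, extend them to $\tilde h, \tilde g \in \Aut(\mathfrak{C})$ and set $f = \tilde g \circ \tilde h \in \Aut(\mathfrak{C})$. For each $U$-rank-$1$ type $q$, the chain of inequalities $\dim(q, M) \leq \dim(\tilde g(q), N) \leq \dim(f(q), M) \leq \dim(f^2(q), M) \leq \cdots$ produces a weakly increasing sequence of dimensions in $M$. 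Hypothesis (2) supplies an $n$ at which $f^n(q) \not\perp^a q \otimes f(q) \otimes \cdots \otimes f^{n-1}(q)$; the resulting forking dependence pins $\dim(f^n(q), M)$ down in terms of the earlier dimensions, forcing the monotone chain to stabilize. Applied symmetrically to $\tilde h \circ \tilde g$ acting on $N$, this yields $\dim(q, M) = \dim(\tilde g(q), N)$ for every $q$, which upgrades the embeddings to an isomorphism $M \cong N$.

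The main obstacle is $(2) \Rightarrow (1)$: translating the qualitative hypothesis ``some $f^n(q)$ is non-almost-orthogonal to the product of the earlier iterates'' into the quantitative statement that the monotone dimension sequence must be \emph{constant}, so that the forward and reverse compositions match dimensions across the orbit and yield an isomorphism. The $(3) \Rightarrow (2)$ step is essentially reduced to the known nomadic-type construction, once one observes that passing to an $f$-invariant countable model converts the failure of (2) over $\acl^{eq}(\emptyset)$ into a genuine nomadic type over that model.
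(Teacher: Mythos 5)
Your cycle of implications matches the paper's, and $(1)\Rightarrow(3)$ is indeed immediate, but there is a serious gap in your $(3)\Rightarrow(2)$ step. The claim that ``weakly minimal stationary types are almost orthogonal iff orthogonal'' is false, and this is exactly the subtlety the paper's argument is built around. Almost orthogonality over $\acl^{eq}(\emptyset)$ does \emph{not} imply orthogonality, nor does it lift to almost orthogonality over a model: if $q$ is the generic type of a weakly minimal locally modular group $G$ (say $G=\mathbb{F}_2^\omega$ with predicates for its finite-index subgroups), then distinct generic types over $\acl(\emptyset)$ are pairwise almost orthogonal over $\acl(\emptyset)$ yet all nonorthogonal, and over any model $M_0$ containing a generic of $G$ their nonforking extensions become non-almost-orthogonal. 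So passing to an $f$-invariant countable $M_0$ and declaring $p$ nomadic simply does not work in the nontrivial locally modular case. This is precisely why the paper splits $\neg(2)\Rightarrow\neg(3)$ into two cases via Buechler's dichotomy: the trivial case, where pairwise orthogonality really does hold (citing Goode) and Theorem~\ref{nomadic_types} applies; and the nontrivial case, which requires the entire weakly minimal group machinery of Section~3 (the notion of weakly generic automorphism, the Baire-category Lemma~\ref{generic_elements}, and the Dushnik--Miller-style construction of Theorem~\ref{wm_group_SB}). Your proposal omits all of this, which is the technical heart of the theorem.

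Your $(2)\Rightarrow(1)$ sketch has the right flavor---set $f=\tilde g\circ\tilde h$, observe that dimensions along the $f$-orbit of each minimal type are nondecreasing, and use dependent orbits to force stabilization---and this matches the spirit of Corollary~\ref{SB_dep_orbits}. However, you lean on the claim that models are ``prime and minimal over an independent set of realizations of $U$-rank-$1$ types over $\acl^{eq}(\emptyset)$'' and hence determined by a dimension function. That is stronger than what is true in general; the paper instead proves the more careful Theorem~\ref{dimension_preserving} (a back-and-forth using type-closed sets and the invariant $k(p,A)$) to show that dimension-preserving embeddings extend to isomorphisms. Your sketch should either cite a precise source for that structure theorem or reproduce an argument along the lines of Theorem~\ref{dimension_preserving}; the ``pins down the dimension'' step also deserves more care, since non-almost-orthogonality only produces \emph{some} dependent realization, not a bound on an arbitrary maximal Morley sequence.
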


The proof of Theorem~\ref{wm_SB} uses some geometric stability theory to reduce to the case where $p$ is the generic type of an infinite definable group, and then a Dushnik-Miller style argument can be used to construct witnesses to the failure of the SB~property whenever condition~2 fails.

A corollary is that for countable, weakly minimal theories, the SB~property is invariant under forcing extensions of the universe of set theory:

\begin{cor}
Among countable weakly~minimal theories, the SB~property is absolute between transitive models of ZFC containing all the ordinals.

\end{cor}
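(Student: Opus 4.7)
The plan is to apply Theorem~\ref{wm_SB} to replace the SB~property with its equivalent condition~2, and then to argue that condition~2 is a $\Pi^1_2$ statement over Polish spaces, so that Shoenfield's absoluteness theorem finishes the job.

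First I would recast condition~2 in terms of countable data. Since $T$ is countable and weakly minimal, $\acl^{eq}(\emptyset)$ has a canonical countable presentation, the Stone space $S(\acl^{eq}(\emptyset))$ is a compact Polish space, and $\Aut(\acl^{eq}(\emptyset))$ is a Polish group. By strong homogeneity of $\C$, every $\sigma \in \Aut(\acl^{eq}(\emptyset))$ extends to some $f \in \Aut(\C)$, and the action of $\Aut(\C)$ on $S(\acl^{eq}(\emptyset))$ factors through the restriction map. So ``$f \in \Aut(\C)$'' in condition~2 may be replaced by ``$\sigma \in \Aut(\acl^{eq}(\emptyset))$'' without loss. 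Moreover, for weakly minimal $T$ each $U$-rank-$1$ type is stationary, so $q \otimes \sigma(q) \otimes \cdots \otimes \sigma^{n-1}(q)$ is a well-defined type over $\acl^{eq}(\emptyset)$ that depends uniformly Borel on $(q,\sigma,n)$, and the predicates ``$q$ has $U$-rank~$1$'' and ``$p$ is almost orthogonal to $r$'' are arithmetical in their parameters.

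Thus the reformulation of condition~2 has the shape $\forall q\, \forall \sigma\, \exists n\, \phi(q,\sigma,n)$ with $q,\sigma$ ranging over Polish spaces and $\phi$ arithmetical, i.e.\ $\Pi^1_2$. By Shoenfield absoluteness, its truth value is the same in any two transitive models of ZFC containing all ordinals. Since the biconditional between the SB~property and condition~2 is itself a theorem of ZFC (by Theorem~\ref{wm_SB}), it holds inside every such transitive model, and therefore the SB~property is absolute for countable weakly minimal $T$. The main thing that requires care is the complexity calculation in the middle paragraph: one must verify that ``almost orthogonality'' and the $\otimes$-product construction for stationary $U$-rank-$1$ types are Borel operations on $S(\acl^{eq}(\emptyset))$, but this is routine from the standard descriptions once $T$ and its canonical countable presentation of $\acl^{eq}(\emptyset)$ are fixed.
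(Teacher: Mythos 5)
Your proof is correct and takes essentially the same approach as the paper: reduce condition~2 of Theorem~\ref{wm_SB} to a statement about countable objects (the paper uses countable models and their automorphisms, you use $\acl^{eq}(\emptyset)$ and $\Aut(\acl^{eq}(\emptyset))$), note it is analytic (the paper computes $\Pi^1_1$; your $\forall q\,\forall\sigma\,\exists n\,\phi$ with $\phi$ arithmetical is in fact also $\Pi^1_1$, not merely $\Pi^1_2$), and invoke Shoenfield absoluteness. The two reductions to countable data are minor variants of one another and either works.
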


\begin{proof}

First, note that for a countable theory $T$, condition~(2) of Theorem~\ref{wm_SB} is equivalent to the following statement:

For any \emph{countable} model $M$ of $T$, any $1$-type $q \in S(M)$, and any $f \in \Aut(M)$, there is an $n < \omega$ such that $f^n(q)$ is not almost orthogonal to $q \otimes f(q) \otimes \ldots \otimes f^{n-1}(q)$.

Using this, it follows that among countable theories, condition~(2) is a (lightface) $\Pi^1_1$ property.  So the corollary follows from the Shoenfield Absoluteness Theorem for $\Pi^1_2$ relations (Theorem~98 of \cite{jech}).

\end{proof}

\begin{cor}
If $T$ is any countable weakly~minimal theory and $T' \supseteq T$ is the expansion by new constants with one new constant naming each element of $\acl_T^{eq}(\emptyset)$, then $T'$ has the SB~property.
\end{cor}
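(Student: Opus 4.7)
The plan is to apply Theorem~\ref{wm_SB} to $T'$ by verifying its condition~(2); the point is that naming $\acl^{eq}_T(\emptyset)$ trivializes the condition. First I would check the hypotheses: since $T$ is countable, $\acl^{eq}_T(\emptyset)$ is countable (only countably many algebraic formulas, each with finitely many solutions), so $T'$ remains countable; weak minimality of $x = x$ is preserved under naming parameters; and $\acl^{eq}_{T'}(\emptyset) = \acl^{eq}_T(\emptyset)$ as sets.

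Next, since every element of $\acl^{eq}_T(\emptyset)$ is the interpretation of a new constant of $T'$, every automorphism $f$ of the monster model of $T'$ fixes $\acl^{eq}_{T'}(\emptyset) = \acl^{eq}_T(\emptyset)$ pointwise, so for any $q \in S(\acl^{eq}_{T'}(\emptyset))$ we have $f(q) = q$. To verify condition~(2) it then suffices to take $n = 1$: one needs $f^1(q) = q$ to fail to be almost orthogonal to $q$; but any non-algebraic type $q$ (in particular any $U$-rank-$1$ type) is not almost orthogonal to itself, since for $a \models q$ the pair $(a,a)$ satisfies $a \nind_{\acl^{eq}(\emptyset)} a$ as $a \notin \acl^{eq}(\emptyset)$. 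Hence condition~(2) holds trivially for $T'$, and Theorem~\ref{wm_SB} delivers the SB~property.

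There is essentially no obstacle here beyond unpacking the definition of almost orthogonality to confirm that a non-algebraic type is never almost orthogonal to itself; the content of the corollary is just the observation that naming $\acl^{eq}(\emptyset)$ eliminates every nontrivial Galois action on types over it, so that condition~(2) becomes vacuous and Theorem~\ref{wm_SB} applies directly.
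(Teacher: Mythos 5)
Your proposal is correct and takes essentially the same approach as the paper: the paper's proof is simply the one-line observation that $T'$ trivially satisfies condition~(2) of Theorem~\ref{wm_SB} while remaining countable and weakly minimal, and your argument unpacks exactly why condition~(2) trivializes — every automorphism of the monster fixes $\acl^{eq}_{T'}(\emptyset) = \acl^{eq}_T(\emptyset)$ pointwise (so $f(q)=q$), and a non-algebraic type is never almost orthogonal to itself (take $a=b \models q$; then $a \nind_{\acl^{eq}(\emptyset)} a$), so $n=1$ witnesses the condition.
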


\begin{proof}

$T'$ trivially satisfies condition~2 of Theorem~\ref{wm_SB} (and is still countable and weakly~minimal).

\end{proof}

After discussing some preliminaries in section~2, we show in section~3 that the SB~property is incompatible with the existence of a definable group with a sufficiently ``generic'' automorphism (Theorem~\ref{wm_group_SB}).  In section~4, we give another criterion for the failure of SB (Theorem~\ref{nomadic_types}) and finish with a proof of Theorem~\ref{wm_SB}.

\section{Preliminaries}

We follow the usual conventions of stability theory, as explained in \cite{pillay}.  

First, by the canonicity of Shelah's $eq$-construction, it is straightforward to see:

\begin{fact}
If $T$ is any theory, then $T$ has the SB~property if and only if $T^{eq}$ does.
\end{fact}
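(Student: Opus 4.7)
The plan is to exploit the well-known functoriality of Shelah's $eq$-construction. First I would establish that the correspondence $M \mapsto M^{eq}$ gives a bijection (up to isomorphism) between models of $T$ and models of $T^{eq}$: every $M \models T$ has a canonical expansion $M^{eq}$ satisfying $T^{eq}$, and conversely, given any $N \models T^{eq}$, the home sort of $N$ yields a model $M \models T$, and the axioms of $T^{eq}$ force $N$ to be canonically isomorphic to $M^{eq}$ (because each auxiliary sort is axiomatized as the quotient of the appropriate power of the home sort by the relevant $\emptyset$-definable equivalence relation).

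Second, I would verify that this correspondence respects elementary maps in both directions. Any elementary embedding $f : M \to N$ between models of $T$ extends uniquely to an elementary embedding $f^{eq} : M^{eq} \to N^{eq}$, sending each class $[\bar a]_E$ to $[f(\bar a)]_E$. Conversely, any elementary embedding $g : M^{eq} \to N^{eq}$ restricts to an elementary embedding $g \upharpoonright M : M \to N$ of the home sorts, and $g$ is determined by this restriction since the action on the auxiliary sorts is forced. Moreover, $f$ is an isomorphism iff $f^{eq}$ is, and likewise for $g$.

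From these two points the equivalence is immediate: $M$ and $N$ are elementarily bi-embeddable as models of $T$ if and only if $M^{eq}$ and $N^{eq}$ are elementarily bi-embeddable as models of $T^{eq}$, and $M \cong N$ if and only if $M^{eq} \cong N^{eq}$. Combined with the bijectivity from the first step, this shows that the SB property for $T$ is equivalent to the SB property for $T^{eq}$.

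There is no real obstacle here, which is why the authors describe the fact as straightforward; the only subtleties worth noting are the uniqueness of the lift $f \mapsto f^{eq}$ and the surjectivity of $M \mapsto M^{eq}$ up to isomorphism, both of which follow directly from the axiomatization of $T^{eq}$.
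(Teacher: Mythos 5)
Your proposal is correct and is exactly the argument the paper has in mind: the paper offers no proof beyond appealing to ``the canonicity of Shelah's $eq$-construction,'' and your write-up simply unpacks that remark by noting that $M \mapsto M^{eq}$ is an equivalence of categories (bijective on models up to isomorphism, and bijective on elementary embeddings with isomorphisms corresponding to isomorphisms), from which the transfer of the SB property in both directions is immediate.
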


\textbf{For the rest of this paper, we will assume that $T = T^{eq}$ for all the theories $T$ we consider.}

A technical advantage of working with weakly~minimal theories is that models are easy to construct:

\begin{lem}
\label{wm_models}
If $T$ is weakly~minimal, $M \models T$, and $A \subseteq \mathfrak{C}$, then $$\acl(M \cup A) \models T.$$
\end{lem}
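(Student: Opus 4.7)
My plan is to verify the Tarski--Vaught criterion for $\acl(M\cup A)\subseteq\mathfrak{C}$: given a consistent formula $\varphi(x,\bar b)$ with $\bar b\in\acl(M\cup A)$, I must produce a realization in $\acl(M\cup A)$. The algebraic case is immediate, since every solution of an algebraic formula lies in $\acl(\bar b)\subseteq\acl(M\cup A)$, so I can focus on nonalgebraic $\varphi(x,\bar b)$.

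Next, I would pick a finite $A_0\subseteq A$ with $\bar b\in\acl(M\cup A_0)$ and reduce to the case $|A_0|=1$ by induction on $|A_0|$. The inductive step uses the identity
\[
\acl\bigl(\acl(M\cup A_0')\cup\{a\}\bigr)=\acl(M\cup A_0'\cup\{a\}),
\]
which allows the base model to be replaced by $\acl(M\cup A_0')$, already known to be a model by the inductive hypothesis. Thus the entire problem reduces to proving that $\acl(M\cup\{a\})\models T$ for an arbitrary single element $a\in\mathfrak{C}$; if $a\in M$ there is nothing to show, so I assume $a\notin M$, in which case weak minimality forces $\tp(a/M)$ to have $U$-rank~$1$ and to be stationary.

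For the one-element case, I would choose an algebraic formula $\theta(\bar z,y,\bar m)$ over $M$ witnessing $\bar b\in\acl(Ma)$ via $\models\theta(\bar b,a,\bar m)$, and form the $M$-formula
\[
\Phi(x,y,\bar m):=\exists\bar z\,\bigl[\theta(\bar z,y,\bar m)\wedge\varphi(x,\bar z)\bigr],
\]
so that every realization of $\varphi(x,\bar b)$ satisfies $\Phi(x,a,\bar m)$. If $\Phi(x,a,\bar m)$ is algebraic in $x$, its finitely many solutions lie in $\acl(Ma)$; since $\bar b$ has only finitely many conjugates over $Ma$, an $Ma$-automorphism sending a conjugate $\bar b'$ to $\bar b$ pushes a solution of $\varphi(x,\bar b')$ to one of $\varphi(x,\bar b)$, still inside $\acl(Ma)$.

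The main obstacle is the remaining subcase, in which $\Phi(x,a,\bar m)$ is nonalgebraic in $x$, so that the fiber of $\Phi$ above $y=a$ contains no algebraic witness of $\varphi(x,\bar b)$. Here one must use the full force of weak minimality rather than mere stability. My plan is to exploit the $U$-rank~$1$ structure of the uniformly definable family $\{\Phi(x,y,\bar m)\}_y$: the $M$-definable predicate ``$\Phi(\cdot,y,\bar m)$ is nonalgebraic'' belongs to $\tp(a/M)$ and is therefore satisfied by elements $y^*\in M$, over which $\Phi$ is realized in $M$; transferring such a realization from the generic parameter $y^*$ back to $y=a$ via the stationarity of $\tp(a/M)$ and the uniqueness of nonforking extensions of $U$-rank~$1$ types should then supply the desired element of $\acl(Ma)$ satisfying $\varphi(x,\bar b)$. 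This transfer step is the most delicate part of the argument.
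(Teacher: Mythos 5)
The overall skeleton you describe (Tarski--Vaught, split on whether the formula is algebraic over its parameters, and invoke weak minimality only in the nonalgebraic case) matches the paper's. Your reduction to $A_0$ finite and then to $|A_0|=1$, and your passage to the formula $\Phi(x,y,\bar m)$, are both correct but unnecessary overhead: the paper simply takes the parameters $\bar b$ from $M\cup A$ directly, which accomplishes the same thing in one step. The algebraic subcase, including the conjugation-over-$Ma$ argument, is fine.

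The genuine gap is the nonalgebraic subcase, which you correctly flag as the crux but do not resolve. The plan ``find $y^*\in M$ over which $\Phi(x,y^*,\bar m)$ is nonalgebraic, realize it in $M$, and then transfer that realization from $y^*$ back to $y=a$ via stationarity'' does not work as stated: a realization $m_0\in M$ of $\Phi(x,y^*,\bar m)$ may be quite specific to $y^*$, and stationarity of $\tp(a/M)$ gives you no map carrying the pair $(y^*,m_0)$ to a pair $(a,m_0')$ with $m_0'\in\acl(Ma)$ (indeed $y^*\in M$ and $a\notin M$ cannot be interchanged by any $M$-automorphism). What is actually needed is the reverse direction: produce an element $m_0\in M$ such that $\Phi(m_0,y,\bar m)$ holds for \emph{all} $y$ realizing the appropriate nonforking type, and then apply this at $y=a$. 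That is precisely what the definability of types supplies: the $\varphi$-definition $d_p\bar y\,\varphi(x;\bar y)$ for $p=\stp(\bar b)$ (or, in your setup, $d_q y\,\Phi(x;y,\bar m)$ for $q=\tp(a/M)$) is a formula over $\acl(\emptyset)$ (respectively over $M$) whose realizations, when taken independent of $\bar b$ (respectively, taken from $M$), automatically satisfy $\varphi(x;\bar b)$. This is the ingredient the paper uses, and it is the ingredient missing from your argument; ``stationarity and uniqueness of nonforking extensions'' is the right neighborhood of ideas but does not by itself yield the definable formula over $M$ that you need to realize. As written, the proposal is a plan with the decisive step left open, not a proof.
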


\begin{proof}
By the Tarski-Vaught test, it is enough to check that any consistent formula $\varphi(x; \overline{b})$ over $M \cup A$ in a \emph{single} free variable has a realization in $\acl(M \cup A)$.  If $\varphi(x; \overline{b})$ has a realization $a \in \acl(\overline{b})$, then we are done.  Otherwise, by weak minimality of $T$, any realization $a$ of the formula is independent from $\overline{b}$.  Let $p = \stp(\overline{b})$.  Then $$\mathfrak{C} \models \forall x \left[ \varphi(x; \overline{b}) \leftrightarrow d_p \overline{y} \varphi(x; \overline{y}) \right],$$ and the formula $d_p \overline{y} \varphi(x; \overline{y})$ is definable over $\acl(\emptyset)$, so it is realized in $M$.
\end{proof}

The next lemma is true in \emph{any} theory (not only weakly minimal ones).

\begin{lem}
\label{alg_parameters}
Suppose that $T$ has an infinite collection of models which are pairwise nonisomorphic and pairwise elementarily bi-embeddable, and $a \in \acl(\emptyset)$.  Then the expansion $T_a := \Th(\mathfrak{C}, a)$ with a new constant naming $a$ does not have the SB~property.  In fact, $T_a$ also has an infinite collection of pairwise nonisomorphic, pairwise bi-embeddable models.
\end{lem}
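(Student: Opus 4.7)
The plan is to lift an infinite collection $\{M_i : i < \omega\}$ of pairwise non-isomorphic, pairwise elementarily bi-embeddable models of $T$ to a similar collection of $T_a$-models by attaching to each $M_i$ an interpretation of the new constant. Since $a \in \acl(\emptyset)$, its $\Aut(\mathfrak{C})$-orbit is a finite set $\{a_1,\dots,a_n\}$, and (using $T = T^{eq}$) every model of $T$ contains all of $a_1,\dots,a_n$. Thus each $M_i$ yields $n$ expansions $N_{i,j} := (M_i, a_j)$ to models of $T_a$, giving $\omega \cdot n$ candidate $T_a$-models in total.

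The key step is to argue that these $\omega \cdot n$ expansions fall into at most $n$ bi-embeddability classes; more precisely, that every $N_{i,l}$ is bi-embeddable with some $N_{0,l'}$. Any elementary $T$-embedding sends each $a_j$ to a realization of $\tp(a/\emptyset)$ and so restricts to an injection, hence a permutation, of the finite set $\{a_1,\dots,a_n\}$. Fixing bi-embeddings $f : M_0 \to M_i$ and $g : M_i \to M_0$, let $\pi$ and $\tau$ be the induced permutations; then $f : N_{0,l} \to N_{i,\pi(l)}$ and $g : N_{i,\pi(l)} \to N_{0,\tau\pi(l)}$ are $T_a$-embeddings. Iterating the composite $g \circ f$ produces a chain
$$N_{0,l} \hookrightarrow N_{i,\pi(l)} \hookrightarrow N_{0,\tau\pi(l)} \hookrightarrow N_{i,\pi\tau\pi(l)} \hookrightarrow \cdots \hookrightarrow N_{0,(\tau\pi)^k(l)},$$
and choosing $k$ with $(\tau\pi)^k = \id$ (possible since $\tau\pi$ permutes a finite set) closes the chain into a cycle of bi-embeddings. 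Hence $N_{0,l}$ and $N_{i,\pi(l)}$ are bi-embeddable as $T_a$-models, and since $\pi$ is surjective every $N_{i,l'}$ arises in this way.

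Granting the key claim, the rest is pigeonhole: partition the $\omega \cdot n$ expansions into the (at most $n$) bi-embeddability classes represented by $N_{0,1},\dots,N_{0,n}$, and select one class of infinite size. Because each $i$ contributes only $n$ expansions, such a class contains $N_{i_k,j_k}$ for infinitely many distinct indices $i_k$. These $T_a$-models are pairwise bi-embeddable by construction, and any $T_a$-isomorphism between two of them restricts to a $T$-isomorphism of the underlying $M_{i_k}$, contradicting the hypothesis that the $M_i$ are pairwise non-isomorphic. I expect the main obstacle to be the sandwich/iteration argument in the middle paragraph (getting from one-sided embeddings to bi-embeddings using finiteness of the orbit); once it is in place, the rest is routine pigeonhole and forgetting the added constant.
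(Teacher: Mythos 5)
Your proof is correct and uses essentially the same machinery as the paper's: iterate compositions of the chosen bi-embeddings until the induced action on the finite orbit of $a$ closes up, then pigeonhole on that orbit to extract an infinite pairwise $T_a$-bi-embeddable subcollection. The only real difference is organizational: you track all $n$ interpretations $(M_i,a_j)$ and close a cycle using $(\tau\pi)^k=\id$, whereas the paper fixes the interpretation at $a$, replaces $f_k$ by $(f_k\circ g_k)^i\circ f_k$ so that $f_k\circ g_k$ fixes $a$, and then pigeonholes on the images $g_k(a)$.
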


\begin{proof}
Let $\{M_i : i < \omega\}$ be an infinite collection of models of $T$, pairwise nonisomorphic and pairwise bi-embeddable, and let $n$ be the number of distinct realizations of $\tp(a)$.  Then we claim that among any $n + 1$ of the $M_i$'s -- say, $M_0, \ldots, M_{n}$ -- there are two that are bi-embeddable as models of $T_a$.  To see this, first pick elementary embeddings $f_k : M_0 \rightarrow M_k$ and $g_k : M_k \rightarrow M_0$ for every $k$ with $1 \leq k \leq n$.  Without loss of generality, every map $f_k \circ g_k$ fixes $a$, since we can replace $f_k$ by $(f_k \circ g_k)^i \circ f_k$ for some $i$ such that $(f_k \circ g_k)^{i+1} (a) = a$.  By the pigeonhole principle, there must be two distinct $k, \ell \leq n$ such that $g_k$ and $g_\ell$ map $a$ onto the same element $a'$.  Thus $f_k (a') = f_\ell (a') = a$, and so the maps $$f_k \circ g_\ell : M_\ell \rightarrow M_k$$ and $$f_\ell \circ g_k: M_k \rightarrow M_\ell$$ both fix $a$.  Since $M_k \ncong M_\ell$, they are not isomorphic as models of $T_a$ either.
\end{proof}

\section{Weakly minimal groups}

In this section, we consider the relation between weakly~minimal groups in a countable language and the SB~property.  We show that if $T$ is any weakly~minimal theory in which there is a definable weakly~minimal abelian group with a certain kind of ``generic'' automorphism, then $T$ does not have the SB~property.  One of the key lemmas is a variation of the Baire~category theorem (Lemma~\ref{generic_elements}).

\begin{fact}
\label{abelian_by_finite}
(\cite{pillay}) If $(G; \cdot, \ldots)$ is an $\emptyset$-definable weakly~minimal group, then $G$ has an $\emptyset$-definable abelian subgroup $H$ of finite index.
\end{fact}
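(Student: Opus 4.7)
My plan splits into two stages: first produce a definable connected subgroup $G^0 \le G$ of finite index, then show $G^0$ is abelian.

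For stage one, I would use that since $G$ is weakly minimal, every definable subgroup $H \le G$ has $U$-rank $0$ or $1$. If $U(H)=0$ then $H$ is finite; if $U(H)=1$ then $H$ contains a generic of $G$, and by translation-invariance of the generic type, $[G:H]<\omega$. Hence every definable subgroup of $G$ is finite or of finite index. The Baldwin--Saxl chain condition then gives that the intersection $G^0$ of all definable finite-index subgroups is already a finite subintersection, hence definable; by construction $G^0$ is connected and $\Aut(\C/\emptyset)$-invariant, so in $T^{eq}$ its canonical parameter lies in $\dcl(\emptyset)$ and $G^0$ is $\emptyset$-definable.

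For stage two, connectedness of $G^0$ combined with the dichotomy from stage one implies that every proper definable subgroup of $G^0$ is finite; equivalently, $G^0$ is a \emph{minimal} group in the sense that it has no proper infinite definable subgroup. The classical theorem that every minimal stable group is abelian (proved by Reineke in the $\aleph_1$-categorical case and routinely adapted to the weakly minimal setting) then yields that $G^0$ itself is abelian, giving the required $\emptyset$-definable abelian subgroup of finite index.

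\textbf{Main obstacle.} The real content is the minimal-implies-abelian step, which I would prove by contradiction: if $G^0$ is minimal and nonabelian, then $Z(G^0)$ is a proper definable subgroup, hence finite, and any $a \in G^0 \setminus Z(G^0)$ has proper, hence finite, centralizer $C_{G^0}(a)$. The commutator map $x \mapsto [x,a]$ then has fibers equal to cosets of $C_{G^0}(a)$, so it is finite-to-one, and its image $\{[x,a] : x \in G^0\}$ is an infinite subset of $G^0$ lying in the coset $a^{-1}\cdot a^{G^0}$. A careful analysis of such commutators --- exploiting stationarity of the generic type of $G^0$ over $\acl(\emptyset)$ together with the chain condition on uniformly definable subgroups --- produces an infinite proper definable subgroup of $G^0$, contradicting minimality. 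This last extraction is the delicate part and is where I would follow Pillay's treatment closely.
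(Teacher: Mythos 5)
This statement is cited in the paper as a known fact from Pillay's \emph{Geometric Stability Theory}; no proof is given, so there is no in-paper argument to compare against. Your proposal nonetheless has a genuine gap in Stage~1. You assert that the Baldwin--Saxl chain condition makes the intersection of all definable finite-index subgroups equal to a finite subintersection and hence definable. Baldwin--Saxl applies only to \emph{uniformly} definable families of subgroups, and the family of all definable finite-index subgroups of a weakly minimal group is not uniformly definable. Indeed, the connected component $G^\circ$ of a weakly minimal group is in general only type-definable: the paper's own setup at the start of Section~3 has $G^\circ = \bigcap_{i<\omega} G_i$ with $G_0 > G_1 > \cdots$ a strictly decreasing chain of $\emptyset$-definable finite-index subgroups, and in the paper's example $G = \prod_{i<\omega}\Z_p$ with predicates for the $H_i$, the connected component is $\{0\}$, which is not a definable finite-index subgroup. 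So there may be no definable connected subgroup of finite index at all, and Stage~2 then has no definable connected group on which to run a Reineke-style argument. (Even granting such a definable connected piece, the extension of Reineke's theorem from $\omega$-stable minimal groups to merely stable minimal groups, while true, is not quite routine; but the fatal problem is Stage~1.) A correct proof has to work with the weakly minimal group $G$ itself and its generic types, producing a definable abelian finite-index subgroup without first isolating the connected component as a definable group.
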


Throughout this section, we assume that $(G; +)$ is a weakly~minimal \emph{abelian} group which is $\emptyset$-definable in the \emph{countable} theory $T$.  The group $G$ is equipped with all the definable structure induced from $T$, which may include additional structure that is not definable from the group operation alone.  Fact~\ref{abelian_by_finite} shows the assumption that $G$ is abelian is not too strong.

We make the following additional assumptions:

1. $G$ is saturated.  We identify the set of all strong types over $0$ with the set of points in $\G := G / G^{\circ}$, and we refer to subsets $X$ of $\G$ as being dense, open, etc. if the corresponding subsets of the Stone space are.

2. The connected component $G^\circ$ is the intersection of the $\emptyset$-definable groups $G_0 > G_1 > G_2 > \ldots$, each of which is a subgroup of $G$ of finite index.

3. $G_{i+1} \neq G_i$.  (This assumption will be justified later.)

We let $\Aut(G)$ denote the group of all \emph{elementary} bijections from $G$ to $G$ (not just the group of all group automorphisms), and we let $\Aut(\G)$ be the group of all group automorphisms of $\G$ which are induced by maps in $\Aut(G)$.

We recall some important facts about the definable structure of a weakly minimal group $G$.  The forking relation between generic elements is controlled by the action of the division ring $D$ of definable quasi-endomorphisms of $G$ (see \cite{pillay}).  Any nonzero quasi-endomorphism $d \in D$ is represented by a definable subgroup $S_d \leq G \times G$ which is an ``almost-homomorphism,'' that is, the projection of $S_d$ onto the first coordinate is a subgroup of finite index and the cokernel $\{g \in G : (0,g) \in S_d\}$ is finite.  In our context, we may take $S_d$ to be $\acl(0)$-definable or even $0$-definable, so $D$ is countable.  From this point on, we fix some such $0$-definable $S_d$ representing each $d \in D$; the particular choice of $S_d$ will not turn out to matter for our purposes.

If $d \in D \setminus \{0\}$ and $H$ is any subgroup of $\acl(0)$ containing $\ker(S_d) \cup \coker(S_d)$, then $S_d$ naturally induces an injective map $\overline{S_d}$ from a subgroup of $K \leq G/H$ of finite index into $G/H$; by extension, we think of this as $d$ itself acting on $K$, and write this map as ``$\overline{d}_H$'' or simply ``$\overline{d}$.''

Note that $\G$ is a Polish group under the Stone topology.  (The fact that the cosets of the groups $G_i / G^\circ$ form a base  for the topology ensures that the space is separable.)  Condition~3 implies that $\G$ is perfect (i.e. there are no isolated points).  We will repeatedly use the fact that any $\varphi \in \Aut(\G)$ (or indeed any image of an elementary embedding from $G$ into itself) is continuous with respect to this topology.

\begin{example}
Let $G$ be the direct product of $\omega$ copies of the cyclic group $\Z_p$, with its definable structure given by the group operation $+$ and unary 
predicates for each of the subgroups $H_i$ consisting of all elements of $g$ whose $i$th coordinate is zero.  Then if we let $G_i$ be the intersection of the groups $H_0, \ldots, H_{i-1}$, we are in the situation above, with $D \cong \mathbb{F}_p$.  Note that although $| \Aut(\G) | = 2^{\aleph_0} $, every $\overline{\varphi} \in \Aut(\G)$ has the property that $\varphi^{p-1} = \id$, so in the terminology of Definition~\ref{unipotent} below, every automorphism is unipotent.
\end{example}

Now suppose that $H$ is a \emph{finite} $0$-definable subgroup of $G$.  Then if $\G_H$ denotes $G / (G^\circ + H)$, we can quotient by the projection map $\pi: \G \rightarrow \G_H$ to define a topology on $\G_H$.  A crucial observation for what follows is that $\G_H$ is still a perfect Polish group.  Note that the finiteness of $H$ lets us conclude that $\G_H$ is perfect; if, say, $(H + G^\circ) / G^\circ$ were dense in $\G$, then the topology on $\G_H$ would be trivial.

Some more notation: fix some finite $0$-definable $H \leq G$.  If $\varphi \in \Aut(G)$, then $\varphi^*_H : \G_H \rightarrow \G_H$ is the corresponding automorphism of $\G_H$.  We may write this as simply ``$\varphi^*$'' if $H$ is understood.  If $g$ is an element of $G$ or $\G$, then $\overline{g}_H$ is its image in $G / (G^\circ + H)$ under the natural quotient projection.  If $d \in D$ and there is some $0$-definable $S_d$ representing $d$ such that $\coker(S_d) \subseteq H$, then $d^*_H$ (or $d^*$) is the corresponding partial function on $\G_H$.  Note that in computing $d^*_H$, the particular choice of $S_d$ representing $d$ only affects the domain of $d^*_H$; two different choices of $S_d$ result in partial maps on $\G_H$ which agree on their common domain, and this common domain is a subgroup of $\G_H$ of finite index.  This motivates the following:

\begin{definition}
If $f_1$ and $g_2$ are two group homomorphisms from open subgroups $K_1, K_2 \leq \G_H$ into $\G_H$, then we write ``$f_1 =^* f_2$'' if there is some open subgroup $K'$ of $K_1 \cap K_2$ such that $f_1 \upharpoonright K' = f_2 \upharpoonright K'$.
\end{definition}

\begin{definition}
\begin{enumerate}
\item If $D_0 \subseteq D$ is finite, then $H$ is \emph{good for $D_0$} if $H$ is a finite $0$-definable subgroup of $G$ containing $\coker(S_d)$ for every $d \in D_0$.
\item If $q \in D[x]$, then $H$ is \emph{good for $q$} if $H$ is good for the set of coefficients of $q$.
\end{enumerate}
\end{definition}

\begin{definition}
If $X \subseteq \G$, then we say that $g + G^\circ$ is in the \emph{$D$-closure of $X$}, or $\cl_D(X)$, if there are:

1. Elements $h_1 + G^\circ, \ldots, h_n + G^\circ$ of $X$,

2. Elements $d_1, \ldots, d_n$ of $D$, and

3. A subgroup $H \leq G$ which is good for $\{d_1, \ldots, d_n\}$,

such that $g_H = (d_1)^*_H (\overline{h_1}_H) + \ldots + (d_n)^*_H (\overline{h_n}_H)$.  

The set $X \subseteq \G$ is \emph{$D$-closed} if $X = \cl_D(X)$.
\end{definition}

If $q = \Sigma_{i \leq n} d_i x^i$ is a polynomial in $x$ over $D$, and $H \leq G$ is good for $q$, then $$q^*_H \varphi := \Sigma_{i \leq n} (d_i)^*_H \circ (\varphi^*_H)^i.$$  Note that $q^*_H$ is a continuous group map from a finite-index subgroup of $\G_H$ into $\G_H$, and that $(q + r)^*_H =^* q^*_H + r^*_H$ and $(q \cdot r)^*_H =^* q^*_H \circ r^*_H$.

\begin{definition}
\label{unipotent}
\begin{enumerate}
\item $\overline{\varphi} \in \Aut(\G)$ is \emph{unipotent} if there is some nonzero $n \in \omega$ such that ${\overline{\varphi}}^n = \id$.
\item $\overline{\varphi} \in \Aut(\G)$ is \emph{weakly generic} if for every $q(x) \in D[x] \setminus \left\{0\right\}$ and every $H \leq G$ which is good for $q$, the map $q^*_H \varphi$ is not identically zero on its domain.
\item $\overline{\varphi} \in \Aut(\G)$ is \emph{everywhere generic} if for every $q(x) \in D[x] \setminus \left\{0\right\}$, for every $H \leq G$ which is good for $q$, and for every nonempty open $U \subseteq \dom(q^*_H \varphi)$, the map $q^*_H \varphi \upharpoonright U$ is not identically zero.

\end{enumerate}

\end{definition}

\begin{prop}
\label{everywhere_generic}
If $\overline{\varphi} \in \Aut(\G)$ is weakly generic, then $\overline{\varphi}$ is everywhere generic.
\end{prop}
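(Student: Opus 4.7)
The plan is to prove the contrapositive: assume $\overline{\varphi}$ is not everywhere generic and build a polynomial witnessing failure of weak genericity. So fix $q \in D[x] \setminus \{0\}$, a finite $0$-definable $H \leq G$ good for $q$, and a nonempty open $U \subseteq \dom(q^*_H \varphi)$ with $q^*_H \varphi \upharpoonright U \equiv 0$.

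The first step is a translation argument to pass from a zero set on a general open set to a zero set on an open neighborhood of $0$. Picking $u_0 \in U$ and using that $q^*_H \varphi$ is a continuous group homomorphism, $V := U - u_0$ is an open neighborhood of $0$ on which $q^*_H \varphi$ still vanishes. Since the cosets of $(G_i + G^\circ + H)/(G^\circ + H)$ form a neighborhood base at $0$ in $\G_H$, after shrinking we may assume $V = (G_j + G^\circ + H)/(G^\circ + H)$ for some $j$. Lifting back, this means: for every $g \in G_j$ at which $q(\varphi)$ is defined, $q(\varphi)(g) \in G^\circ + H$.

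The second step exploits the fact that $G_j$ is $0$-definable, hence preserved by every $\varphi \in \Aut(G)$, together with the finiteness of $G/G_j$. Thus $\varphi$ induces a finite-order automorphism of $G/G_j$; let $m$ be a positive integer with $\varphi^m \equiv \id$ on $G/G_j$, so $\varphi^m(g) - g \in G_j$ for every $g \in G$. Now set $q'(x) := q(x)(x^m - 1) \in D[x]$, which is nonzero. Using additivity of the group homomorphism $q(\varphi)$, a direct computation gives $q'(\varphi)(g) = q(\varphi)(\varphi^m(g) - g)$, and since $\varphi^m(g) - g \in G_j$, the previous paragraph yields $q'(\varphi)(g) \in G^\circ + H$. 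Choosing a finite $0$-definable $H' \supseteq H$ good for $q'$ (its coefficients are $\Z$-combinations of the coefficients of $q$, so only finitely many new cokernels must be absorbed), we conclude that $(q')^*_{H'} \varphi$ is identically zero on its entire domain, contradicting weak genericity.

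I expect the main technical nuisance to be the bookkeeping around the $=^*$ relation and the domains of the partial maps $(\cdot)^*_{H'}$, in particular verifying that the polynomial identity $q'(\varphi) =^* q(\varphi) \circ (\varphi^m - \id)$ transfers cleanly to the claim that $(q')^*_{H'} \varphi$ vanishes on all of its domain (rather than just on a further open subgroup). Everything else is routine given the profinite, perfect-Polish topology on $\G_H$ together with the translation trick for continuous group homomorphisms.
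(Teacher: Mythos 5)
Your proof is correct and takes a genuinely different route from the paper's. Both arguments prove the contrapositive and both begin by reducing the zero set of $q^*_H\varphi$ to an open subgroup of the form $(G_j + H)/(G^\circ + H)$ (the paper does this via a translation after isolating the constant term of $q$, you do it by translating by $u_0$ directly), and both exploit the finiteness of $G/(G_j+H)$. But the witnessing polynomial is constructed quite differently. The paper first reduces to the case $x \nmid q$, rewrites the vanishing of $q$ as ``$r^*_H\varphi$ is the identity on an open subgroup'' for the polynomial $r = -d_0^{-1}(d_k x^k + \dots + d_1 x)$, then applies pigeonhole to the induced maps $(r^\ell)^*_H\varphi$ on $G/(G_n+H)$ to find $\ell < m$ with $(r^\ell - r^m)^*_H\varphi$ landing in the kernel, and finally squares to get $s = (r^\ell - r^m)^2$. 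Your argument is more direct: since $\overline{\varphi}$ induces a permutation of the finite quotient $G/G_j$, some power $\overline{\varphi}^m$ is trivial there, so $(\varphi^*)^m - \id$ maps everything into $(G_j+H)/(G^\circ+H)$, which is exactly the subgroup where $q^*_H\varphi$ vanishes; hence $q' = q\cdot(x^m-1)$ works. This avoids the reduction to $x\nmid q$, the rewriting step, and the pigeonhole over all powers of $r$, and it replaces the somewhat opaque squaring trick with the transparent identity $q'(\varphi)(g) = q(\varphi)\bigl(\varphi^m(g)-g\bigr)$.

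On the technical nuisance you flag: you are right that the identity $(q')^*_{H'}\varphi = q^*_{H'}\varphi\circ\bigl((\varphi^*_{H'})^m-\id\bigr)$ a priori holds only up to $=^*$, because the coefficients of $q'=qx^m-q$ in degrees where the two pieces overlap are differences $d_{i-m}-d_i$, and $(d_{i-m}-d_i)^*$ need only $=^*$-agree with $(d_{i-m})^* - (d_i)^*$. But you can make this exact for free: since $\overline{\varphi}^{tm}$ is also trivial on $G/G_j$ for every $t\geq 1$, replace $m$ by a multiple exceeding $\deg q$. Then the monomials of $qx^m$ and of $q$ do not overlap, every coefficient of $q'$ is $\pm d_i$, and the direct computation holds on the nose on the full domain of $(q')^*_{H'}\varphi$ (with $H'$ chosen good for $\{\pm d_i\}$). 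The paper's own final step (``$s^*_H\varphi$ is identically zero on its domain'') relies on the same kind of implicit $=^*$ bookkeeping, so your write-up is at least as careful as the original; with the $m>\deg q$ tweak it is in fact cleaner.
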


\begin{proof}
Suppose that $\overline{\varphi}$ is \emph{not} everywhere generic, as witnessed by $q \in D[x] \setminus \left\{0\right\}$, $H$, and a nonempty open $U \subseteq \dom(q^*_H \varphi)$ such that $q^*_H \varphi \upharpoonright U$ is identically zero.  Say $q = x^m \cdot q_0$, where $q_0$ has a nonzero constant term; then since $(\varphi^*_H)^m$ is an injective group homomorphism, $(q_0)^*_H \varphi \upharpoonright (\varphi^*_H)^m(U)$ is identically zero and $(\varphi^*_H)^m(U)$ is open; so we may assume that $x$ does not divide $q$.  Write $q = d_k x^k + \ldots + d_0$, where $d_i \in D$.  Since $d_0 \neq 0$, it follows that if $r = d_0^{-1} d_k x^k - \ldots - d_0^{-1} d_1 x$, then there is some nonempty open $V \subseteq \G_H$ such that $(d_0^{-1})^* (V) \subseteq U$ and $r^*_H \varphi \upharpoonright V = \id_V$.

Without loss of generality, $V = (g + G_n + H) / (G^\circ + H)$ for some $g \in G$ and some $n < \omega$, and (shrinking $V$ if necessary) we may also assume that $(G_n + H) / (G^\circ + H) \subseteq \dom(r^*_H \varphi)$.  For any $h \in (G_n + H) / (G^\circ + H)$, there are $g_1, g_2 \in V$ such that $h = g_1 + g_2$, so it follows that $r^*_H \varphi \upharpoonright (G_n + H) / (G^\circ + H)$ is the identity map.  For any $\ell \in \omega$, the map $(r^\ell)^*_H \varphi$ induces a group homomorphism from a subgroup of $G / (G_n + H)$ into $G / (G_n + H)$, and since $G / (G_n + H)$ is finite, there are numbers $\ell < m < \omega$ such that the image of $(r^{\ell} - r^m)^*_H \varphi$ is contained in $(G_n + H) / (G^\circ + H)$.  Let $s = (r^{\ell} - r^m)^2$.  Note that since $x$ divides $r$, the polynomial $s$ is nonzero, but $s^*_H \varphi$ is identically zero on its domain; so $\varphi$ is not weakly generic.

\end{proof}

\begin{quest}
If $\Aut(\G)$ contains a non-unipotent element, does $\Aut(\G)$ necessarily contain a weakly generic element?
\end{quest}

\begin{definition}
Let $S$ be a Polish space.  A continuous function $f: S^k \rightarrow S$ is \emph{nondegenerate} if there is some $i$ ($1 \leq i \leq k$) such that for any elements $a_i, \ldots, a_{i-1}, a_{i+1}, \ldots, a_k \in S$, the function $$f_{\overline{a}}(x) = f(a_1, \ldots, a_{i-1}, x, a_{i+1}, \ldots, a_k)$$ is a homeomorphism.
\end{definition}

The next lemma is a version of the Baire category theorem.  (To get the usual Baire category theorem, let $f_0 : S \rightarrow S$ be the identity map.)

\begin{lem}
\label{generic_elements}
Suppose that $S$ is a perfect Polish space and $\langle f_i : i \in \omega \rangle$ is a countable collection of continuous, nondegenerate functions, with $f_i : S^{k(i)} \rightarrow S$, and $\langle H_\ell : \ell \in \omega \rangle$ is a collection of nowhere dense subsets of $S$.  Then there is a nonempty perfect set of elements $\langle a_\sigma : \sigma \in 2^{\aleph_0} \rangle$ of $S$ such that for every $i, \ell \in \omega$ and every set $\left\{ \sigma_1, \ldots, \sigma_{k(i)} \right\}$ of $k(i)$ \emph{distinct} elements of $2^{\aleph_0}$, $f_i(a_{\sigma_1}, \ldots, a_{\sigma_{k(i)}}) \notin H_\ell$.
\end{lem}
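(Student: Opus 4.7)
The plan is a Baire-category tree construction, enhanced to handle the multivariate hypotheses. Fix a complete metric on $S$ compatible with the topology and enumerate $\omega \times \omega$ as $\langle (i_n, \ell_n) : n \in \omega \rangle$ with each pair appearing infinitely often. I recursively build nonempty open sets $U_\tau \subseteq S$ indexed by $\tau \in 2^{<\omega}$ satisfying (a) $\overline{U_{\tau \frown 0}}$ and $\overline{U_{\tau \frown 1}}$ are disjoint and contained in $U_\tau$, and (b) $\mathrm{diam}(U_\tau) \le 2^{-|\tau|}$. For each $\sigma \in 2^\omega$ I let $a_\sigma$ be the unique element of $\bigcap_n U_{\sigma \restriction n}$; the map $\sigma \mapsto a_\sigma$ is then a continuous injection from the Cantor set into $S$, so its image is a nonempty perfect set.

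The substantive requirement added at stage $n$ is the following: for every tuple $(\tau_1,\ldots,\tau_{k(i_n)})$ of \emph{distinct} elements of $2^n$, arrange that $f_{i_n}(\overline{U_{\tau_1}} \times \cdots \times \overline{U_{\tau_{k(i_n)}}}) \cap H_{\ell_n} = \emptyset$. Since only finitely many such tuples exist, I handle them one at a time, shrinking the involved $U_{\tau_j}$'s to smaller nonempty open sets. For a single tuple, let $i_*$ be the index guaranteed by nondegeneracy of $f_{i_n}$. Pick arbitrary $a_j \in U_{\tau_j}$ for $j \neq i_*$; the map $x \mapsto f_{i_n}(a_1,\ldots,a_{i_*-1},x,a_{i_*+1},\ldots,a_{k(i_n)})$ is then a homeomorphism of $S$, and the preimage of the nowhere dense set $\overline{H_{\ell_n}}$ under this homeomorphism is itself nowhere dense. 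So I can pick $a_{i_*} \in U_{\tau_{i_*}}$ whose image lies in the open set $S \setminus \overline{H_{\ell_n}}$. By joint continuity of $f_{i_n}$, a whole open box around $(a_1,\ldots,a_{k(i_n)})$ still maps into this complement; I shrink each coordinate slightly more so that the resulting $\overline{V_j}$ sit inside this box, and replace each $U_{\tau_j}$ by $V_j$. Once all tuples have been processed, I use perfectness of $S$ to split each $U_\tau$ into two small-diameter children with disjoint closures, completing the stage.

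To verify the construction works, fix $i,\ell \in \omega$ and $k(i)$ distinct branches $\sigma_1,\ldots,\sigma_{k(i)}$ in $2^\omega$. Choose $n$ large enough that the restrictions $\tau_j := \sigma_j \restriction n$ are pairwise distinct and that $(i_n,\ell_n) = (i,\ell)$; such $n$ exists because the branches eventually split pairwise and because $(i,\ell)$ occurs cofinally in the enumeration. By construction at stage $n$, $f_i(\overline{U_{\tau_1}} \times \cdots \times \overline{U_{\tau_{k(i)}}}) \cap H_\ell = \emptyset$, and since $a_{\sigma_j} \in U_{\tau_j}$ for each $j$, this gives $f_i(a_{\sigma_1},\ldots,a_{\sigma_{k(i)}}) \notin H_\ell$, as required.

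The principal conceptual step is the use of nondegeneracy together with nowhere density of $H_\ell$ inside a single shrinking: combining them yields the avoidance that ordinary Baire category gives for one function. The rest is bookkeeping: each stage modifies only finitely many $U_\tau$, every modification preserves nonemptiness and openness, and so the tree construction proceeds without obstruction. The one mild point to be careful about is that the cofinality of each pair $(i,\ell)$ in the enumeration is essential, since we cannot predict in advance at which level the branches $\sigma_1,\ldots,\sigma_{k(i)}$ will pairwise separate.
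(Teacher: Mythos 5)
Your proof is correct and takes essentially the same approach as the paper's: a Baire-category tree construction building nonempty open sets indexed by $2^{<\omega}$, at each stage combining nondegeneracy of $f_i$, nowhere density of $H_\ell$, and continuity to shrink finitely many of the sets so that the relevant product avoids $H_\ell$. The only differences (you enumerate pairs $(i,\ell)$ and treat all level-$n$ tuples simultaneously, whereas the paper enumerates ``viable triples'' one per stage; and you impose a complete metric with shrinking diameters to pin down a unique point on each branch, whereas the paper works with nested ``conditions'' and picks a point at the end) are bookkeeping variants of the same argument.
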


\begin{proof}

First, we set some notation.  Let $\mathcal{O}$ be the set of all \emph{nonempty} open subsets of $S$.  A \emph{condition} is a function $F: D \rightarrow \mathcal{O}$, where $D$ is some finite, downward-closed subset of $2^{< \omega}$, such that

1. If $s^{\frown} \langle 0 \rangle, s^{\frown} \langle 1 \rangle \in D$, then $F(s^{\frown} \langle 0 \rangle) \cap F(s^{\frown} \langle 1 \rangle) = \emptyset$; and

2. If $s$ is an initial segment of $t$, then $F(t) \subseteq F(s)$.

Given two conditions $F, F'$, we write $F \leq F'$ if $\dom(F) \subseteq \dom(F')$ and for any $s \in \dom(F)$, $F'(s) \subseteq F(s)$.

A \emph{viable triple} is an ordered triple $\langle i, \ell, (s_1, \ldots, s_k) \rangle$ such that $i, \ell \in \omega$, the $s_j$'s are pairwise incompatible elements of $2^{< \omega}$, and $k = k(i)$.  Let $$\left\{ \langle i(t), \ell(t), (s^t_1, \ldots, s^t_{k(t)}) \rangle : t \in \omega \right\}$$ be an enumeration of all viable triples.

We construct an increasing sequence of conditions $\langle F(t) : t < \omega \rangle$ by induction on $t$.  As a base case, let $F(0)$ be the function with domain $\left\{\langle \rangle \right\}$ such that $F(\langle \rangle) = S$.  For the induction step, suppose that we have picked $F(t)$.

\begin{claim}
We can pick $F(t+1) \geq F(t)$ such that $$\{s^t_1, \ldots, s^t_{k(t)} \} \subseteq \dom(F(t+1))$$ and $$f_{i(t)}(F(t+1)(s^t_1), \ldots, F(t+1)(s^t_{k(t)})) \cap H_{\ell(t)} = \emptyset. $$
\end{claim}

\begin{proof}
First, since $S$ is perfect, we can pick a condition $F' \geq F(t)$ such that $\{s^t_1, \ldots, s^t_{k(t)} \} \subseteq \dom(F')$.  By the fact that $f_{i(t)}$ is nondegenerate and $H_{\ell(t)}$ is nowhere dense, there is a tuple $(a_1, \ldots, a_{k(t)})$ such that $a_m \in F'(s^t_m)$ and $f_{i(t)} (a_1, \ldots, a_{k(t)})\notin \overline{H_{\ell(t)}}$.  Pick an open neighborhood $U$ of $f_{i(t)}(a_1, \ldots, a_{k(t)})$ such that $U \cap \overline{H_{\ell(t)}} = \emptyset$.  By continuity, $f^{-1}_{i(t)}(U)$ is open, and it contains $(a_1, \ldots, a_{k(t)})$.  Therefore, there is a condition $F(t+1) \geq F'$ such that $a_m \in F(t+1)(s^t_m)$ and $$f_{i(t)}(F(t+1)(s^t_1), \ldots, F(t+1)(s^t_{k(t)})) \subseteq U,$$ so this $F(t+1)$ works.

\end{proof}

Pick $F(t+1)$ by induction as in the Claim.  Note that it follows that $$\bigcup_{t \in \omega} \dom(F(t)) = 2^{< \omega},$$ since every $s \in 2^{< \omega}$ is in a viable triple.  For any $\sigma \in 2^\omega$, let $$\widehat{F}(\sigma) = \bigcap \{F(t)(\sigma \upharpoonright m) : t \in \omega \textup{ and } \sigma \upharpoonright m \in \dom(F(t)) \}.$$  It follows from the properties of the conditions $F(t)$ that $\widehat{F}(\sigma)$ is always well-defined and nonempty and that if $\sigma \neq \tau$, then $\widehat{F}(\sigma) \cap \widehat{F}(\tau) = \emptyset$.  Finally, pick elements $a_\sigma \in \widehat{F}(\sigma)$ for every $\sigma \in 2^\omega$.  The fact that these $a_\sigma$'s work follows from the way we enumerated the viable triples and the Claim above.

\end{proof}

We need one more simple lemma before proving the main theorem of this section.

\begin{lem}
\label{loc_mod_formula}
Suppose that $T$ is \emph{any} stable theory, $M \models T$, $\theta(x)$ is a weakly minimal formula over $\emptyset$ in $T$, and $A \subseteq \theta(\mathfrak{C})$.  Then if $M' = \acl(M \cup A)$ and $M' \models T$, $$\theta(M') \subseteq \acl(\theta(M) \cup A).$$
\end{lem}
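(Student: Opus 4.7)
The plan is to argue by contradiction. Suppose $c \in \theta(M')$ but $c \notin \acl(\theta(M) \cup A)$, and set $p := \tp(c/\theta(M) \cup A)$. Then $p$ is non-algebraic and extends $\theta$. Since $\theta$ is weakly minimal, every non-algebraic type extending $\theta$ has $U$-rank $1$ and, crucially, does not fork over $\acl^{eq}(\emptyset)$. Consequently $p$ is the restriction to $\theta(M) \cup A$ of a global type $P$ whose defining scheme lives over $\acl^{eq}(\emptyset) \subseteq M$.

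Because $c \in \acl(M \cup A)$, by finite character I pick an algebraic formula $\varphi(x; \bar{m}, \bar{a})$ with $\bar{m} \in M$ and $\bar{a} \in A$ whose finite solution set contains $c$. Since $\varphi(x;\bar{m},\bar{a})$ is algebraic while $P$ is not, $\varphi(x;\bar{m},\bar{a}) \notin P$, so $(\bar{m},\bar{a})$ fails the relevant instance of the defining scheme of $P$. Next I form a Morley sequence $(\bar{m}_i, c_i)_{i < \omega}$ over $\theta(M) \cup A$ in $\tp(\bar{m}, c / \theta(M) \cup A)$; since $p$ is non-algebraic, the $c_i$'s are pairwise distinct realizations of $p$ lying in $\theta$, each satisfying $c_i \in \acl(\bar{m}_i, \bar{a})$, so the finite sets $X_i := \varphi(\mathfrak{C}; \bar{m}_i, \bar{a}) \cap \theta(\mathfrak{C})$ are nonempty and genuinely vary with $i$.

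The main step, and the principal obstacle, is to convert this configuration into a contradiction by exploiting that $\bar{m} \in M$ and $M \models T$. The natural target is the canonical parameter $e := \lceil X_0 \rceil$ of the finite set $X_0 = \varphi(\mathfrak{C}; \bar{m}, \bar{a}) \cap \theta(\mathfrak{C})$. One has $e \in \dcl^{eq}(\bar{m},\bar{a})$, and because $X_0$ is a finite subset of $\theta(\mathfrak{C})$, also $e \in \dcl^{eq}(\theta(\mathfrak{C}))$. I expect that combining the $\acl^{eq}(\emptyset)$-definability of $P$ with a Tarski--Vaught style argument (in the spirit of the proof of Lemma~\ref{wm_models}) that realizes the $\theta$-content of $\tp(\bar{m}/\theta(M) \cup A)$ inside the model $M$ will force $e \in \dcl^{eq}(\theta(M) \cup A)$. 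Once this descent is secured, $c \in X_0 \subseteq \acl^{eq}(e) \subseteq \acl(\theta(M) \cup A)$, contradicting the standing assumption and finishing the proof.
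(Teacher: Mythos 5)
Your proof does not close: you explicitly flag the crucial descent step as an obstacle and write that you ``expect'' a Tarski--Vaught style argument to force $e\in\dcl^{eq}(\theta(M)\cup A)$, but you never supply it. The machinery you set up beforehand --- the global extension $P$, its $\acl^{eq}(\emptyset)$-definable scheme, the Morley sequence $(\bar m_i,c_i)$ --- is not used in the finish you sketch and does not by itself yield the descent, so this is a genuine gap rather than a cosmetic omission.

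The missing ingredient, and the one the paper uses, is a finite-satisfiability (coheir) observation: since $M\models T$ and each of $b,a_1,\ldots,a_n$ lies in $\theta(\mathfrak{C})$, the type $\tp(ba_1\cdots a_n/M)$ is finitely satisfiable in $\theta(M)$ and hence does not fork over $\theta(M)$. The paper picks $a_1,\ldots,a_n\in A$ with $n$ minimal such that $b\in\acl(M\cup\{a_1,\ldots,a_n\})$, so $\{a_1,\ldots,a_n\}$ is $M$-independent and $b$ is interalgebraic with $a_n$ over $Ma_1\cdots a_{n-1}$, and then from $ba_1\cdots a_n\ind_{\theta(M)}M$ deduces $U(ba_1\cdots a_n/\theta(M))=n$, which forces $b\in\acl(\theta(M)\cup\{a_1,\ldots,a_n\})$. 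Your canonical-parameter device could in fact be completed along the same lines: the tuple enumerating $X_0$ together with $\bar a$ lies entirely in $\theta(\mathfrak{C})$, so $\tp(X_0\bar a/M)$ is finitely satisfiable in $\theta(M)$, giving $X_0\ind_{\theta(M)\bar a}M$; since $X_0\subseteq\acl(M\bar a)$, nonforking descends the algebraicity to give $X_0\subseteq\acl(\theta(M)\bar a)$, and $c\in X_0$ finishes (note this yields $\acl^{eq}$, not the $\dcl^{eq}$ you asked for, but $\acl$ is what the lemma requires). This finite-satisfiability step, not the definability of $P$ over $\acl^{eq}(\emptyset)$, is what exploits the hypothesis $M\models T$, and it is precisely what is missing from your write-up.
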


\begin{proof}
Suppose $b \in \theta(M')$ and $b \in \acl(M \cup \{a_1, \ldots, a_n\})$, where $a_i \in A$ and $n$ is minimal.  Minimality of $n$ implies that $\{a_1, \ldots, a_n\}$ is independent over $M$ and $b$ is interalgebraic with $a_n$ over $M \cup \{a_1, \ldots, a_{n-1}\}$.  Note that $\tp(b a_1 \ldots a_n / M)$ is finitely satisfiable in $\theta(M)$, and therefore $$b a_1 \ldots a_n \ind_{\theta(M)} M.$$  So $$U(b a_1 \ldots a_n / \theta(M)) = U(b a_1 \ldots a_n / M) = n,$$ and therefore $b$ is interalgebraic with $a_n$ over $\theta(M) \cup \{a_1, \ldots, a_{n-1}\}$.
\end{proof}

\begin{thm}
\label{wm_group_SB}
Suppose that $T$ is weakly minimal and countable, and that $G$ is a weakly~minimal abelian group which is $\emptyset$-definable in $T$.  If $\Aut(\G)$ contains a weakly generic map $\overline{\varphi}$, then $T$ has an infinite collection of pairwise nonisomorphic, pairwise elementarily bi-embeddable models.
\end{thm}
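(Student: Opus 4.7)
The plan is to construct, using Lemma \ref{generic_elements}, a family $\{M_\sigma : \sigma \in 2^\omega\}$ of countable pairwise elementarily bi-embeddable models of $T$ that splits into $2^{\aleph_0}$ isomorphism classes; any infinite pairwise nonisomorphic subfamily then witnesses the conclusion. To begin, extend $\overline\varphi$ to some $\hat\varphi \in \Aut(\mathfrak{C})$ whose induced action on $\G$ is $\overline\varphi$, and fix a countable $M_0 \preceq \mathfrak{C}$.

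Second, I would work in the perfect Polish group $S := \G_H$ for a finite $\emptyset$-definable $H \leq G$ (enlarged along the enumeration of polynomials in $D[x]$, which is harmless since $D$ is countable). Apply Lemma \ref{generic_elements} with the identity $f(x) = x$ (a 1-variable homeomorphism) together with the 2-variable maps $f_q(x_1, x_2) = x_1 - q^*_H(\varphi)(x_2)$ for each nonzero $q \in D[x]$; each $f_q$ is nondegenerate in $x_1$ because fixing $x_2$ leaves a translation of $x_1$. As the countable collection of nowhere dense sets, take, for every nonzero $q \in D[x]$ and every $m \in M_0$, the coset $\{x \in \dom q^*_H(\varphi) : q^*_H(\varphi)(x) = \overline m_H\}$ of $\ker q^*_H(\varphi)$ (nowhere dense by Proposition \ref{everywhere_generic}), together with the singletons $\{\overline m_H\}$ for every $m \in M_0$. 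The lemma yields $\{a_\sigma : \sigma \in 2^\omega\} \subseteq S$ with (i) $q^*_H(\varphi)(a_\sigma) \neq \overline m_H$ for every nonzero $q$, every $m \in M_0$, and every $\sigma$; and (ii) $a_\sigma - q^*_H(\varphi)(a_\tau) \neq \overline m_H$ for all $\sigma \neq \tau$, every nonzero $q$, and every $m \in M_0$. Lift each $a_\sigma$ to $\tilde a_\sigma \in G$ and put
$$M_\sigma := \acl\bigl(M_0 \cup \{\hat\varphi^n(\tilde a_\sigma) : n \in \Z\}\bigr),$$
which is a model of $T$ by Lemma \ref{wm_models}.

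For bi-embeddability, condition (i) forces $(\hat\varphi^n(\tilde a_\sigma))_{n \in \Z}$ to be a Morley sequence over $M_0$ in the generic type of $G^\circ$ (any $D[\varphi]$-relation landing in $M_0$ would violate (i)), so all $\tilde a_\sigma$'s realize the same strong type over $M_0$. By stationarity and saturation of $\mathfrak{C}$, an automorphism of $\mathfrak{C}$ fixing $M_0$ pointwise sends the orbit of $\tilde a_\sigma$ to that of $\tilde a_\tau$, restricting to an elementary embedding $M_\sigma \hookrightarrow M_\tau$; symmetry gives bi-embeddability. For non-isomorphism, any isomorphism $f : M_\sigma \to M_\tau$ restricts (via Lemma \ref{loc_mod_formula}) to a bijection of the $G$-parts, so $f(\tilde a_\sigma)$ is algebraic over $G(M_0)$ together with a finite $\hat\varphi$-orbit segment of $\tilde a_\tau$; by the $D$-module description of algebraic closure in a weakly minimal abelian group, $f(\tilde a_\sigma)$ lies, modulo $G^\circ + H$, in a $D[\varphi^*]$-combination of $a_\tau$ with an $M_0$-image. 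Condition (ii) together with the countability of $D[x]$ and $M_0$ restricts the set of $\tau$ admitting such a relation (for each fixed $\sigma$) to a countable one, so the $2^{\aleph_0}$ models split into $2^{\aleph_0}$ isomorphism classes.

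The main obstacle is the non-isomorphism step: an abstract $T$-isomorphism $f$ need not intertwine with $\hat\varphi$, so extracting from $f$ an explicit $D[\varphi^*]$-relation between the residues $a_\sigma$ and $a_\tau$ in $\G_H$ requires combining Lemma \ref{loc_mod_formula} with the weakly-minimal $D$-linear analysis of generics, and then pulling the relation down to the quotient $S$. Once this reduction is in place, condition (ii) supplies the contradiction and the counting of admissible $\tau$'s per $\sigma$ delivers the desired uncountably many isomorphism classes.
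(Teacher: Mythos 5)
Your proposal breaks down at both the bi-embeddability and the non-isomorphism steps, and both gaps are precisely the difficulties that force the paper's more elaborate construction. For bi-embeddability, you claim that an automorphism of $\mathfrak{C}$ fixing $M_0$ pointwise carries the $\hat\varphi$-orbit of $\tilde a_\sigma$ to that of $\tilde a_\tau$, by stationarity of a common Morley sequence. But $\tilde a_\sigma$ and $\tilde a_\tau$ lie in distinct cosets $a_\sigma \neq a_\tau$ of $G^\circ$, so they realize distinct strong types over $\acl(\emptyset)$ (the paper identifies strong types of generics with points of $\G$); no automorphism of $\mathfrak{C}$ fixing $\acl(\emptyset)$ pointwise can map one to the other. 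Likewise the elements $\hat\varphi^n(\tilde a_\sigma)$ for varying $n$ have pairwise distinct images $\overline{\varphi}^n(a_\sigma)$ in $\G$, so they are not a Morley sequence in a single type over $M_0$. As a result your family $\{M_\sigma\}$ need not be pairwise bi-embeddable at all. The paper instead arranges a descending chain $M^0 \succ M^1 \succ \ldots$ together with $f(M^\ell) \prec M^{\ell+1}$, so that one embedding direction is inclusion and the other is an iterate of $f$; this is a structural feature that an unstructured family indexed by $2^\omega$ cannot replicate.

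The non-isomorphism step has the gap you yourself flag, and it is not recoverable within your framework. An isomorphism $M_\sigma \cong M_\tau$ extends to an automorphism of $\mathfrak{C}$ that induces an \emph{arbitrary} $\overline{h} \in \Aut(\G)$; the relation one extracts is $\overline{h}(a_\sigma) \in \cl_D\bigl(\G(M_0) \cup \{\overline{\varphi}^n(a_\tau) : n\}\bigr)$, which involves $\overline{h}(a_\sigma)$ and not $a_\sigma$. Your conditions (i) and (ii) constrain the $a_\sigma$'s directly via only countably many nowhere-dense sets, while $|\Aut(\G)| = 2^{\aleph_0}$: no fixed countable list of genericity conditions can simultaneously defeat every $\overline{h}$, and the counting claim that each $\sigma$ admits only countably many compatible $\tau$ fails because the union $\bigcup_{\overline{h}} \overline{h}(\G(M_\sigma))$ can exhaust $\G$. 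This is exactly why the paper runs a Dushnik--Miller recursion of length $2^{\aleph_0}$, enumerating all triples $(h,i,j) \in \Aut(\G) \times \omega \times \omega$ and at each successor stage invoking Lemma~\ref{generic_elements} to find a \emph{single} new element that kills the one fixed automorphism $h_\alpha$ as a potential map $\G(M^{i(\alpha)}) \to \G(M^{j(\alpha)})$, recording the obstruction in the forbidden sets $X^\ell$. The genericity lemma is thus applied $2^{\aleph_0}$ times, once per target automorphism, rather than once to produce a generic family; that transfinite diagonalization against all of $\Aut(\G)$ is the indispensable ingredient your one-shot construction lacks.
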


\begin{proof}

First we note that if $\Aut(\G)$ contains a weakly generic map, then $\G$ must be infinite, so we can pick the definable subgroups $G_0 > G_1 > \ldots$ so that $G_{i+1} \neq G_i$.  This justifies assumption~3 at the beginning of the section and implies that $\G$ is a perfect topological space.

The key to our proof is that we can use the fact that $|\Aut(\G)| = 2^{\aleph_0}$ to construct bi-embeddable models $M$ and $N$ of $T$ via chains of length $2^{\aleph_0}$ where at each successor stage we kill one potential isomorphism between $\G(M)$ and $\G(N)$.  We call this a Dushnik-Miller type argument since it recalls the idea of the proof of Theorem~5.32 in \cite{dushnik_miller}.

As usual, let $D$ be the division ring of definable quasi-endomorphisms of $G$.  Fix a weakly generic $\overline{\varphi} \in \Aut(\G)$, which is also everywhere generic by Proposition~\ref{everywhere_generic}.  Also pick some $f \in \Aut(\mathfrak{C})$ such that $\overline{f \upharpoonright G(\mathfrak{C})} = \overline{\varphi}$.  

Since $\G$ is separable, $|\Aut(\G)| \leq 2^{\aleph_0}$.  Therefore we can pick a sequence $\{ (h_\alpha, i(\alpha), j(\alpha)) : \alpha < 2^{\aleph_0} \}$ listing triples in $\Aut(\G) \times \omega \times \omega$ in such a way that for any $\alpha < 2^{\aleph_0}$,

(A) $i(\alpha) < j(\alpha)$, and

(B) For any $i_0 < j_0 < \omega$ and any $h \in \Aut(\G)$, there is some $\beta$ such that $\alpha < \beta < 2^{\aleph_0}$ and $(h_\beta, i(\beta), j(\beta)) = (h, i_0, j_0)$.

Next, we will define models $M^\ell_\alpha$ of $T$ and subsets $X^\ell_\alpha$ of $\G$ for every $\ell < \omega$ and $\alpha < 2^{\aleph_0}$ by recursion on $\alpha < 2^{\aleph_0}$ such that:

\begin{enumerate}
\item $| M^\ell_\alpha \cup X^\ell_\alpha | \leq |\alpha| + \aleph_0$;
\item $M^0_\alpha \succ M^1_\alpha \succ M^2_\alpha \succ \ldots$ and $f(M^\ell_\alpha) \prec M^{\ell+1}_\alpha$;
\item If $\alpha < \beta$, then $M^\ell_\alpha \prec M^\ell_\beta$ and $X^\ell_\alpha \subseteq X^\ell_\beta$;
\item $\G(M^\ell_\alpha) \cap X^\ell_\alpha = \emptyset$;
\item If $\alpha < \beta$, then at least one of the following holds:

(C) There is some $a \in \G(M^{i(\alpha)}_\beta)$ such that $h_\alpha(a) \in X^{j(\alpha)}_\beta$; or 

(D) There is some $b \in \G(M^{j(\alpha)}_\beta)$ such that $h^{-1}_\alpha(b) \in X^{i(\alpha)}_\beta$.
\end{enumerate}

Once we have the models $M^\ell_\alpha$, we can let $M^\ell = \bigcup_{\alpha < 2^{\aleph_0}} M^\ell_\alpha$.  The $M^\ell$'s are pairwise bi-embeddable (via inclusions and iterates of $f$), and properties~$4$ and $5$ ensure that if $\ell \neq k$ then there is no $h \in \Aut(\C)$ mapping $\overline{G}(M^\ell)$ onto $\overline{G}(M^k)$, so \textit{a fortiori} there is no such $h$ mapping $M^\ell$ onto $M^k$.  Thus $\{M^\ell : \ell < \omega\}$ will be the models we are looking for.

For the base case $\alpha = 0$, we can pick some countable $M \models T$ such that $f(M) \subseteq M$ and let $M^\ell_0 = M$ and $X^\ell_0 = \emptyset$.  At limit stages we take unions, and there are no problems.  So we only have to deal with the successor stage, and suppose we have $M^\ell_\alpha$ and $X^\ell_\alpha$ as above.

To set some more notation, if $Z \subset \G$ and $H \leq G$, then we let $$Z_H = \left\{h \in \G_H : \exists g \in G \,\left[g + G^\circ \in Z \textup{ and } h = g + (G^\circ + H) \right]\,\right\}.$$  Here are two possible situations which we will consider:

$(*)$ There is a nonempty clopen \emph{subgroup} $K \leq \G$ with the following property: for every $q(x) \in D[x]$ such that $x | q$, every nonempty open $U \subseteq K$, and every finite $0$-definable $H \leq G$ which is good for $q$, there is a $g \in U_H$ such that either $g \notin \dom (q^*_H \varphi)$ or $(h_\alpha)^*_H(g) \neq q^*_H \varphi (g)$.

$(\dagger)$ For every $q(x) \in D[x]$, every nonempty open $U \subseteq \G$, and every finite $0$-definable $H \leq G$ which is good for $q$, there is a $g \in U_H$ such that either $g \notin \dom(q^*_H \varphi)$ or $(h^{-1}_\alpha)^*_H (g) \neq q^*_H \varphi (g)$.

\begin{claim}
Either $(*)$ holds or $(\dagger)$ holds (or possibly both).
\end{claim}

\begin{proof}
The failure of $(\dagger)$ gives us a nonempty open $U \subseteq \G$, a $q(x) \in D[x]$, and $H \leq G$ which is good for $q$ such that for every $g \in U_H$, $(h^{-1}_\alpha)^*_H (g) = q^*_H \varphi (g)$.  Let $K$ be the subgroup of $\G$ generated by $U$.  The continuity of the group operation implies that $K$ is open, in fact clopen, and since $(h^{-1}_\alpha)^*_H$ and $q^*_H \varphi$ are both homomorphisms from $\G_H$ into itself, \begin{equation}\label{eq:h-inv}(h^{-1}_\alpha)^*_H \upharpoonright K_H = q^*_H \varphi \upharpoonright K_H.\end{equation}  Let $K' = h^{-1}_\alpha(K)$, so $K'$ is another clopen subgroup of $\G$.  By the failure of $(*)$, there is an $r(x) \in D[x]$ such that $x | r$, some nonempty open $V \subseteq K'$, and some $H' \leq G$ good for $r$ such that for every $g \in V_H$, \begin{equation}\label{eq:h}(h_\alpha)^*_{H'}(g) = r^*_{H'} \varphi (g).\end{equation}

Note that equations~\ref{eq:h-inv} and \ref{eq:h} remain true if we replace either $H$ or $H'$ with any larger finite $0$-definable subgroup of $G$, such as $\widehat{H} := H + H'$.  For any $g \in (h_\alpha)^*_{\widehat{H}}(V_{\widehat{H}})$, note that $$g \in (h_\alpha)^*_{\widehat{H}}(K'_{\widehat{H}}) = K_{\widehat{H}},$$ and so by equations~\ref{eq:h-inv} and \ref{eq:h}, $$g = (h_\alpha)^*_{\widehat{H}} \left( (h^{-1}_\alpha)^*_{\widehat{H}} (g)\right) = (r \cdot q)^*_{\widehat{H}} \varphi (g).$$  Note that since $x | (r \cdot q)$, the polynomial $r \cdot q - 1$ is nonzero, so we have a contradiction to the fact that $\varphi$ is everywhere generic.
\end{proof}

Now we return to the main proof and argue by cases.

\textbf{Case 1:} $(*)$ holds.

Fix a nonempty clopen $K \leq \G$ witnessing $(*)$.

\begin{claim}
There is an element $a \in K$ such that

(E) For any $b \in \G(M^{j(\alpha)}_\alpha)$, any $q \in D[x]$ such that $x | q$, and any $H \leq G$ which is good for $q$, if $a_H \in \dom(q^*_H \varphi),$ then $$(h_\alpha)^*_H(a_H) - q^*_H \varphi (a_H) \neq b_H;$$ and

(F) For any $c \in \cl_D (\bigcup_{\ell < \omega} (\G(M^\ell_\alpha) \cup X^\ell_\alpha))$, any nonzero $q \in D[x]$, and any $H \leq G$ good for $q$, if $a_H \in \dom(q^*_H \varphi)$, then $$q^*_H \varphi (a_H) \neq c_H.$$

\end{claim}

\begin{proof}

For any $q \in D[x]$ and any $H \leq G$ which is good for $q$, we define two subgroups $K_{q,H}$ and $K'_{q,H}$ of $K$ as follows.  If $x | q$, then let $$K_{q,H} = \pi_H^{-1} \left[\ker((h_\alpha)^*_H - q^*_H \varphi)\right] \cap K,$$ where $\pi_H: \G \rightarrow \G_H$ is the natural projection map.  Note that $(h_\alpha)^*_H - q^*_H$ is a continuous group map, so its kernel is a closed subgroup of $\G_H$, and therefore $K_{q,H}$ is a closed subgroup of $\G$.  If $x$ does not divide $q$, we let $K_{q,H} = \left\{0\right\}$.  By assumption~$(*)$, $K_{q,H}$ is always nowhere dense.  If $q \neq 0$, then let $$K'_{q,H} = \pi_H^{-1} \left[\ker(q^*_H \varphi)\right] \cap K,$$ and if $q = 0$, then let $K'_{q,H} = \left\{0\right\}$.  As before, $K'_{q,H}$ is a closed subgroup of $\G$, and the genericity of $\overline{\varphi}$ implies that $K'_{q,H}$ is nowhere dense.

So by Lemma~\ref{generic_elements} applied to the nondegenerate map $(x,y) \mapsto x - y$, there is a collection $\langle a_\sigma : \sigma < 2^{\aleph_0} \rangle$ of elements of $K$ such that for any two distinct $\sigma, \tau < 2^{\aleph_0}$, $a_\sigma - a_\tau$ does not lie in any of the groups $K_{q,H}$ or $K'_{q,H}$.  Now since $$|\cl_D \left(\bigcup_{\ell < \omega} (\G(M^\ell_\alpha) \cup X^\ell_\alpha)\right) | < 2^{\aleph_0},$$ for any $q \in D[x]$ there are fewer than $2^{\aleph_0}$ choices of $\sigma < 2^{\aleph_0}$ such that $a_\sigma$ belongs to the $K_{q,h}$-coset or the $K'_{q,H}$-coset of some element of $\cl_D (\bigcup_{\ell < \omega} (\G(M^\ell_\alpha) \cup X^\ell_\alpha))$.  Since the cofinality of $2^{\aleph_0}$ is uncountable, there is some $\sigma < 2^{\aleph_0}$ such that for \emph{any} $q \in D[x]$ and any $h \in \cl_D (\bigcup_{\ell < \omega} (\G(M^\ell_\alpha) \cup X^\ell_\alpha))$, $a_\sigma - h \notin (K_{q,H} \cup K'_{q,H})$.  Let $a = a_\sigma$; this works.

\end{proof}

Pick $a$ as in the Claim above and pick $g \in G(\mathfrak{C})$ such that $g + G^\circ = a$.  If $\ell \leq i(\alpha)$, then we let $$M^\ell_{\alpha + 1} = \acl( M^\ell_\alpha \cup \{ f^n(g) : n < \omega \}),$$ and if $\ell > i(\alpha)$, we let $$M^\ell_{\alpha+1} = \acl(M^\ell_\alpha \cup \{f^n(g) : (\ell - i(\alpha)) \leq n < \omega \}).$$  If $\ell \neq j(\alpha)$, the we let $X^\ell_{\alpha+1} = X^\ell_\alpha$, and let $X^{j(\alpha)}_{\alpha+1} = X^{j(\alpha)}_\alpha \cup \{h_\alpha (a)\}$.

We check this works.  Note that condition~5 (C) holds by definition, and conditions~1 through 3 are automatic.  For Condition~4, we first check that $\G(M^\ell_{\alpha+1}) \cap X^\ell_\alpha = \emptyset$.  First note that by Lemma~\ref{loc_mod_formula}, $$\G(M^\ell_{\alpha+1}) = \cl_D \left(\G(M^\ell_{\alpha}) \cup \left\{\varphi^n(a) : n < \omega \right\} \right).$$  So if Condition~4 fails, then there is some $b \in \G(M^\ell_\alpha)$, some $q \in D[x]$, and some $H \leq G$ good for $q$ such that $$b_H + q^*_H\varphi (g_H) \in (X^\ell_\alpha)_H,$$ but since $a_H = g_H$, this contradicts condition (F) above.  The only other way that 4 could fail is if $h_\alpha(a) \in \G(M^{j(\alpha)}_{\alpha+1})$, or equivalently (by Lemma~\ref{loc_mod_formula}), there is some $q \in D[x]$ such that $x | q$, some $H \leq G$ good for $q$, and some $b \in \G(M^{j(\alpha)}_\alpha)$ such that $$(h_\alpha (a))^*_H = (h_\alpha)^*_H (a_H) = b_H + q^*_H \varphi (a_H),$$ but this contradicts (E).

\textbf{Case 2:} $(\dagger)$ holds.

Exactly like in Case~1, we have:

\begin{claim}
There is an element $b \in \G$ such that

(E') For any $a \in \G(M^{i(\alpha)}_\alpha)$, any $q \in D[x]$, and any $H \leq G$ which is good for $q$, if $b_H \in \dom(q^*_H \varphi)$, then $$(h^{-1}_\alpha)^*_H (b) - q^*_H \varphi (b_H) \neq a_H;$$ and

(F') For any $c \in \cl_D (\bigcup_{\ell < \omega} (\G(M^\ell_\alpha) \cup X^\ell_\alpha))$, any nonzero $q \in D[x]$, and any $H \leq G$ good for $q$, if $b_H \in \dom(q^*_H \varphi)$, then $$q(\varphi)(b_H) \neq c_H.$$

\end{claim}

Then pick $g \in G(\mathfrak{C})$ such that $g + G^\circ = b$, and let $$M^\ell_{\alpha+1} = \acl (M^\ell_\alpha \cup \left\{f^n(g) : n < \omega \right\}),$$ $$N_{\alpha + 1} = \acl (N_\alpha \cup \left\{f^n(h) : n < \omega \right\}),$$ let $X^\ell_{\alpha+1} = X^\ell_\alpha$ if $\ell \neq i(\alpha)$, and let $X^{i(\alpha)}_{\alpha+1} = X^{i(\alpha)}_\alpha \cup \left\{h^{-1}_\alpha(b)\right\}$.

Just as before, it can be checked that these sets satisfy conditions~1 through 5.

\end{proof}

\begin{remark}
Since for the base case of our construction, we let the models $M^\ell_0$ (for $\ell < \omega$) all be equal, it is worth pointing out why the models $M^\ell$ we eventually construct are not all equal.  To see this, note that the identity map from $\overline{G}$ to $\overline{G}$ will be listed (infinitely often) as some $h_\alpha$, and so the fact that $f$ has independent orbits implies that $(*)$ holds (with $K = \overline{G}$).  Thus at stage $\alpha$, we ensure that $M^{i(\alpha)} \neq M^{j(\alpha)}$.
\end{remark}

\section{Weakly minimal theories}

In this section, we return to the general context of weakly minimal theories and prove Theorem~\ref{wm_SB}.  We assume throughout this section that $T$ is a weakly minimal theory, and ``$a \in M$'' means that $a$ is in the home sort of $M$.  These assumptions imply that for any $a \in M$, the type $\tp(a/\emptyset)$ is \emph{minimal} (that is, has $U$-rank $1$), though it is not necessarily weakly~minimal.

\begin{definition}
Let $p \in S(A)$ be a minimal type.
\begin{enumerate}
\item The type $p$ \emph{has bounded orbits over $B$} if there is some $n < \omega$ such that for every $f \in \Aut(\mathfrak{C} / B)$, there are $i < j \leq n$ such that $f^i(p) = f^j(p)$.  Otherwise, $p$ \emph{has unbounded orbits} over $B$.
\item The type $p$ \emph{has dependent orbits over $B$} if for every $f \in \Aut(\mathfrak{C} / B)$, there is an $n < \omega$ such that $$f^n(p) \, {\not\perp}^a \, p \otimes f(p) \otimes \ldots \otimes f^{n-1}(p).$$  Otherwise, $p$ \emph{has an independent orbit over $B$.}
\item If $p \in S(\acl(\emptyset))$, then we say $p$ has bounded orbits (or dependent orbits) if it has bounded (dependent) orbits over $\emptyset$.
\end{enumerate}
\end{definition}

\begin{remark}
The minimal type $p \in S(\acl(\emptyset))$ has an independent orbit if and only if there is an $f \in \Aut(\mathfrak{C})$ such that for \emph{any} choice of realizations $\langle a_i : i < \omega \rangle$ of the types $f^i(p)$, the set $\{ a_i : i < \omega \}$ is independent.
\end{remark}

\begin{quest}
1. If $p$ has unbounded orbits, then does $p$ necessarily have an independent orbit?

2. If $p \in S(\acl(\emptyset))$ has dependent orbits and $g \in \Aut(\mathfrak{C})$, then does $g(p)$ also have dependent orbits?
\end{quest}

Note that in the terminology above, if $p$ is the generic type of a weakly~minimal, locally modular group $G$ defined over $\acl(\emptyset)$, then $\Aut(G)$ does not have a weakly~generic element if and only if for \emph{every} generic type $q$ of $G$ has dependent orbits.  

If $p \in S(\acl(\emptyset))$ is minimal and $M \models T$, let ``$\dim(p,M)$'' mean the dimension of $p(M)$ as a pregeometry.

\begin{definition}
An elementary map $f: M \rightarrow N$ between models of $T$ is called \emph{dimension-preserving} if for any minimal $p \in S(\acl(\emptyset))$, $\dim(p,M) = \dim(f(p),N)$.
\end{definition}

\begin{thm}
\label{dimension_preserving}
Any two models $M, N$ of $T$ are isomorphic if and only if there is a dimension-preserving map $f: M \rightarrow N$.
\end{thm}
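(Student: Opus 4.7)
The forward direction is immediate: any isomorphism is elementary and dimension-preserving. For the converse, suppose $f : M \to N$ is elementary and dimension-preserving. After composing $f$ with a suitable $\sigma \in \Aut(\mathfrak{C})$ and replacing $N$ by the isomorphic model $\sigma(N)$, I may assume that $f$ is the identity on $\acl(\emptyset)$, which turns dimension-preservation into the symmetric statement $\dim(p, M) = \dim(p, N)$ for every minimal $p \in S(\acl(\emptyset))$. Using Lemma~\ref{wm_models} together with the fact (stated just before the theorem) that every element of any model of $T$ realizes a minimal type over $\acl(\emptyset)$, any $L \models T$ satisfies $L = \acl(\acl(\emptyset) \cup \bigcup_p B_p(L))$ for any choice of bases $B_p(L) \subseteq p(L)$, so $|L|$ is controlled by $|\acl(\emptyset)|$ and the dimension function; in particular $|M| = |N|$.

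The plan is then to construct an isomorphism $g : M \to N$ by a transfinite back-and-forth of length $|M|$. I build an increasing chain of partial elementary maps $g_\beta : A_\beta \to B_\beta$ extending $\id_{\acl(\emptyset)}$, with $|A_\beta| = |B_\beta| < |M|$, alternating forth and back steps along fixed enumerations of $M$ and $N$. For a forth step at $m \in M \setminus A_\beta$: if $m \in \acl(A_\beta)$, pick an appropriate algebraic conjugate in $\acl(B_\beta)$. Otherwise, weak minimality identifies $\tp(m / A_\beta)$ as the unique non-algebraic extension of $p := \tp(m / \acl(\emptyset))$, so I need a realization of $p$ in $N$ lying outside $\acl(B_\beta)$. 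The elementarity of $g_\beta$, together with the fact that it fixes $\acl(\emptyset)$ pointwise, yields $\dim(p, \acl(A_\beta)) = \dim(p, \acl(B_\beta))$; the existence of $m \in p(M) \setminus \acl(A_\beta)$ forces this common value to be strictly less than $\dim(p, M) = \dim(p, N)$, so a suitable $n \in p(N) \setminus \acl(B_\beta)$ exists. The back step is symmetric, and $g := \bigcup_\beta g_\beta$ is the desired isomorphism.

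I do not expect a substantial obstacle: this is essentially a standard weakly-minimal back-and-forth. Lemma~\ref{wm_models} ensures the intermediate algebraic closures are models, weak minimality reduces every non-algebraic $1$-type to the non-forking extension of a minimal type over $\acl(\emptyset)$, and the matching of dimensions supplies exactly enough room at each step. The one point requiring real attention is the strict inequality $\dim(p, \acl(A_\beta)) < \dim(p, M)$, which comes directly from $m \in p(M) \setminus \acl(A_\beta)$ together with the definition of a basis of a pregeometry.
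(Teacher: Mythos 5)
The forward direction is fine, and the reduction to $f\upharpoonright\acl(\emptyset)=\id$ is harmless, but the converse has a genuine gap at exactly the step you flag as ``the one point requiring real attention.'' You assert that $m\in p(M)\setminus\acl(A_\beta)$ forces the strict inequality $\dim(p,\acl(A_\beta))<\dim(p,M)$. This is true for finite dimensions but false in general: if $B$ is an \emph{infinite} basis for $p(\mathfrak{C})\cap\acl(A_\beta)$ and $m\notin\acl(A_\beta)$, then $B\cup\{m\}$ is independent, but $|B\cup\{m\}|=|B|$, so adjoining $m$ does not change $\dim(p,\cdot)$ as a cardinal. Concretely, after a limit stage of cofinality $\omega$ one can have $\dim(p,\acl(A_\beta))=\aleph_0=\dim(p,M)$ while $p(M)\not\subseteq\acl(A_\beta)$, and simultaneously $p(N)\subseteq\acl(B_\beta)$; then no $n\in p(N)\setminus\acl(B_\beta)$ exists and the construction halts. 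The equality $\dim(p,\acl(A_\beta))=\dim(p,\acl(B_\beta))$ (which does hold) gives you no control over whether $p(N)$ is exhausted, because equal infinite dimensions are compatible with one of the two sets $p(M)\setminus\acl(A_\beta)$, $p(N)\setminus\acl(B_\beta)$ being empty and the other not.

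This is precisely the difficulty the paper's proof is built to circumvent, and it requires a real structural input beyond dimension-matching. The paper works with \emph{type-closed} sets $A$ (sets containing, along with each $a\in A$, every $a'\in M$ realizing the same minimal strong type) and the integer $k(p,A)$, the largest $k$ with $p^{(k)}\perp^a\tp(A/\acl(\emptyset))$. Claim~\ref{basis} shows that for type-closed $A$, if $k(p,A)$ exists then a Morley sequence $I_0\subseteq M$ in $p$ of \emph{finite} length $k(p,A)$ already gives $p(M)\subseteq\acl(A\cup I_0)$. The Zorn chain of closed maps then absorbs all of $p(M)$ in a single extension step --- either by matching full Morley bases when $A$ is independent from $p(M)$ (Cases~1,2), or by matching finite Morley sequences of length $k(p,A)$ and invoking Claim~\ref{basis} (Case~3). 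This is what replaces the invalid strict-inequality step: you never rely on a single new element increasing an infinite dimension, because you either add an entire basis with room to spare, or you add a finite configuration that algebraically generates the rest. Some input of this kind is unavoidable; the element-by-element back-and-forth you propose cannot be repaired by bookkeeping alone.
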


\begin{proof}
Left to right is obvious, since any isomorphism is dimension-preserving.  
For the converse, suppose that $f: M \rightarrow N$ is dimension-preserving. 

If $A \subseteq M$, we say that $A$ is \emph{type-closed (in $M$)} if $\acl(\emptyset)\subseteq A$
and whenever $a \in A$, $a' \in M$, and $\stp(a') = \stp(a)$ is minimal, then $a' \in A$.

If $A \subseteq M$ and $B \subseteq N$, we call an elementary map $g: A \rightarrow B$ \emph{closed} if its domain is type-closed in $M$ and its image is type-closed in $N$.

If $A \subseteq \mathfrak{C}$ and $p \in S(\acl(\emptyset))$, then define 
$k(p, A) \in \omega$ to be the largest $k$ (if one exists) such that $p^{(k)}$ is almost orthogonal to $\tp(A/\acl(\emptyset))$.
Note that  $A$ is independent from any Morley sequence in $p$ of length at most $k(p,A)$ (if it exists), and 
$A$ is independent from every Morley sequence in $p$ if $k(p,A)$ does not exist.

\begin{claim}
\label{basis}
Suppose that $A \subseteq M$ is type-closed, $p \in S(\acl(\emptyset))$ is minimal, and $k(p, A)$
exists.  Then for any Morley sequence $I_0 \subseteq M$ in $p$ of length $k(p,A)$, $p(M) \subseteq \acl(A \cup I_0)$.
\end{claim}

\begin{proof}
By definition of $k(p, A)$, there is some $a \models p | I_0$ such that $a \nind_{I_0} A$.  Pick such an $a$ and a finite set $A_0 \cup \{c\} \subseteq A$ such that $I_0 a \nind A_0 c$.  We may assume that the size of $A_0 \cup \{c\}$ is minimal, so that $A_0$ is independent from any Morley sequence in $p$ of length $k(p,A) + 1$.  By the weak minimality of $T$ and stationarity, if $q = \stp(c)$, then \emph{any} realization of $p | I_0$ is interalgebraic over $I_0 \cup A_0$ with some realization of $q | A_0$.  So any $b \in p(M) \setminus \acl(I_0)$ is interalgebraic over $A_0 \cup I_0$ with some $d \in q(M)$, and the fact that $A$ is type-closed implies that $d \in A$.
\end{proof}

It follows from Claim~\ref{basis} (when $k(p,A)=0$) that if $A\subseteq M$ is type-closed, then so is $\acl(A)$.
Thus, any elementary $h:\acl(A)\rightarrow\acl(B)$ extending a closed map $g:A\rightarrow B$ is closed.

Visibly, the restriction $\sigma:=f\upharpoonright\acl(\emptyset)$ is closed, and the union of an increasing sequence of
closed maps is closed.  Thus, to conclude that $M$ and $N$ are isomorphic, it suffices by Zorn's Lemma and symmetry
to show
that if $g:A\rightarrow B$ extending $\sigma$ is closed, and $p\in S(\acl(\emptyset))$ is minimal, 
then there is a closed $h$ extending $g$, whose domain contains $A\cup p(M)$.

Fix such a $g$ and $p$.  Note that $k(p,A)$ exists if and only if $k(g(p),B)$ exists and when they exist, they are equal.
There are three cases:

Case~1:  $k(p,A)$ does not exist.  Let $I\subseteq M$ be any maximal Morley sequence in $p$, and $J\subseteq N$ be any
maximal Morley sequence in $g(p)$.  Since $g$ extends $\sigma$, $g$ is dimension preserving, so $|I|=|J|$.
Let $j:I\rightarrow J$ be any bijection.  Then $g\cup j$ is elementary, and any elementary map  
$h:\acl(A\cup I)\rightarrow\acl(B\cup J)$ 
extending $g\cup j$ will be closed.

Case~2: $k(p,A)$ exists, but $\dim(p,M)\le k(p,A)$.  Then again, $\dim(p,M)=\dim(g(p),N)$, and for any bijection $j$ between
bases for $p(M)$ and $g(p)(M)$, there is a closed $h:\acl(AI)\rightarrow\acl(BJ)$ extending $g\cup j$.

Case~3:  $k(p,A)$ exists, and $\dim(p,M)> k(p,A)$.  Let $I_0\subseteq p(M)$ and $J_0\subseteq g(p)(N)$
be  Morley sequences of length $k(p,A)$ in $p$ and $g(p)$, respectively, and let $j:I_0\rightarrow J_0$ be
any bijection.  Again, $g\cup j$ is elementary and any elementary $h:\acl(A\cup I_0)\rightarrow \acl(B\cup J_0)$
extending $g\cup j$ is closed.  But   $p(M)\subseteq\acl(A\cup I_0)$ and
$g(p)(M)\subseteq\acl(B\cup J_0)$ by Claim~\ref{basis}, so any such $h$ suffices.

\end{proof}

\begin{cor}
\label{SB_dep_orbits}
If $T$ is weakly minimal and every minimal type $p \in S(\acl(\emptyset))$ has dependent orbits, then $T$ has the SB~property.
\end{cor}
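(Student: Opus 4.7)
The plan is to derive the corollary directly from Theorem~\ref{dimension_preserving}. Fix bi-embeddable $M, N \models T$ with elementary embeddings $f \colon M \to N$ and $g \colon N \to M$, and extend each to $\hat{f}, \hat{g} \in \Aut(\mathfrak{C})$. Let $\sigma = g \circ f$ and $\hat{\sigma} = \hat{g}\hat{f}$. For every minimal $p \in S(\acl(\emptyset))$ the monotone chain
\[
\dim(p, M) \le \dim(\hat{f}(p), N) \le \dim(\hat{g}\hat{f}(p), M) = \dim(\hat{\sigma}(p), M)
\]
holds because elementary embeddings map bases of $p$-realizations injectively and independence-preservingly. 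Hence it suffices to prove the key claim that $\dim(p, M) = \dim(\hat{\sigma}(p), M)$ for every elementary self-embedding $\sigma$ of $M$: this collapses the chain to equalities, exhibiting $f$ as dimension-preserving, and Theorem~\ref{dimension_preserving} delivers $M \cong N$.

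For the key claim, recall that in a weakly minimal theory non-orthogonality of minimal types in $S(\acl(\emptyset))$ is an equivalence relation, preserved by every automorphism of $\mathfrak{C}$. Consequently the set $S = \{k \in \Z : \hat{\sigma}^k(p) \not\perp p\}$ is a subgroup of $\Z$ (closure under negation by applying $\hat{\sigma}^{-k}$, closure under addition by transitivity of non-orthogonality together with automorphism-invariance), so $S = d\Z$ for some $d \ge 0$. The dependent-orbits hypothesis supplies $n > 0$ with $\hat{\sigma}^n(p) \, {\not\perp}^a \, \bigotimes_{i<n} \hat{\sigma}^i(p)$. Picking a Morley sequence $(a_0, \ldots, a_{n-1})$ in the product and a realization $a \models \hat{\sigma}^n(p)$ forking with it over $\acl(\emptyset)$, weak minimality forces $a \in \acl(a_0, \ldots, a_{n-1})$ since $\hat{\sigma}^n(p)$ has $U$-rank one. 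Taking a $\subseteq$-minimal subset of the $a_i$ that still makes $a$ algebraic isolates an index $j < n$ over whose $\acl(\emptyset)$-augmented parameter base the non-forking extensions of $\hat{\sigma}^n(p)$ and $\hat{\sigma}^j(p)$ fail to be almost orthogonal; hence $\hat{\sigma}^n(p) \not\perp \hat{\sigma}^j(p)$, and applying $\hat{\sigma}^{-j}$ places the positive integer $n - j$ into $S$, so $d \ge 1$.

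With $d \ge 1$ in hand, $\hat{\sigma}^d(p) \not\perp p$, and the standard fact that non-orthogonal minimal types over $\acl(\emptyset)$ in a weakly minimal theory induce the same dimension in any model (via a finite-to-finite $\acl(\emptyset)$-definable correspondence between their generic realizations) yields $\dim(\hat{\sigma}^d(p), M) = \dim(p, M)$. Combined with the inequalities
\[
\dim(p, M) \le \dim(\hat{\sigma}(p), M) \le \dim(\hat{\sigma}^2(p), M) \le \cdots \le \dim(\hat{\sigma}^d(p), M),
\]
obtained by iterating the self-embedding $\sigma$, this forces equality throughout. In particular $\dim(p, M) = \dim(\hat{\sigma}(p), M)$, completing the claim.

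The main obstacle I anticipate is the minimal-subset bookkeeping that extracts non-orthogonality of $\hat{\sigma}^n(p)$ with a single iterate $\hat{\sigma}^j(p)$ from non-almost-orthogonality with their product — specifically verifying that, over the expanded parameter base, both types remain generic and their non-forking extensions fail to be almost orthogonal. Once that step is secure, the subgroup structure of $S$ and the cyclic pigeonhole on the monotone dimension chain finish the proof with no further work.
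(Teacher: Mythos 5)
Your overall strategy -- reduce to showing that the composite self-embedding $\sigma = g \circ f$ preserves the dimension of every minimal $p \in S(\acl(\emptyset))$, then invoke Theorem~\ref{dimension_preserving} -- is exactly what the paper does; its proof is a single sentence asserting that $f$ must be dimension-preserving, and you are filling in that assertion.

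Two remarks on the details you supply. First, the detour through the subgroup $S = \{k \in \Z : \hat\sigma^k(p) \not\perp p\}$ and its generator $d$ is unnecessary: once your minimal-subset argument produces a single positive integer $m = n - j$ with $\hat\sigma^m(p) \not\perp p$, the monotone chain $\dim(p,M) \le \dim(\hat\sigma(p),M) \le \cdots \le \dim(\hat\sigma^m(p),M)$ already collapses, so the cyclic-pigeonhole structure of $S$ buys you nothing. Second, and more substantively, the ``standard fact'' you invoke -- that non-orthogonal minimal types over $\acl(\emptyset)$ have the same dimension in every model, via an $\acl(\emptyset)$-definable finite-to-finite correspondence -- really requires non-\emph{almost}-orthogonality over $\acl(\emptyset)$, i.e.\ two realizations that are interalgebraic over $\acl(\emptyset)$ itself. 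Your minimal-subset extraction hands you interalgebraicity of $a$ and $a_j$ only over the larger base $\acl(\emptyset) \cup (S_0 \setminus \{a_j\})$, which gives non-orthogonality but not directly the $\acl(\emptyset)$-definable correspondence. For minimal types over $\acl(\emptyset)$ in a weakly minimal theory one can in fact bring the witness down to $\acl(\emptyset)$ (essentially the same exchange-lemma calculus that the paper deploys in Claim~\ref{basis} of the proof of Theorem~\ref{dimension_preserving}), but you should make that reduction explicit rather than fold it into a parenthetical; as written, the step from $\not\perp$ to a dimension-matching $\acl(\emptyset)$-correspondence is where the real content lies and is left implicit.
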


\begin{proof}
If $M$ and $N$ are models of such a theory and $f: M \rightarrow N$ and $g : N \rightarrow M$ are elementary embeddings, then $f$ must be dimension-preserving.  So by Theorem~\ref{dimension_preserving}, $M \cong N$.
\end{proof}

The next result is from the first author's thesis (\cite{my_thesis}), where types as in the hypothesis were called ``nomadic.''

\begin{thm}
\label{nomadic_types}
If $T$ is stable and there is a stationary regular type $p \in S(A)$ and $f \in \Aut(\mathfrak{C})$ such that the types $\{f^i(p) : i < \omega\}$ are pairwise orthogonal, then $T$ has an infinite collection of models that are pairwise nonisomorphic and pairwise not bi-embeddable.
\end{thm}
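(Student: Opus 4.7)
The plan is to construct, for each function $\lambda : \omega \to \{\aleph_1, \aleph_2, \ldots\}$, a model $M_\lambda$ of $T$ in which the pairwise orthogonal regular types $q_i := f^i(p)$ are realized with prescribed dimensions $\dim(q_i, M_\lambda) = \lambda(i)$, and then to select an antichain of $\lambda$'s in an appropriate quasi-order so that the resulting models are pairwise not bi-embeddable. The guiding idea is that the $q_i$'s will furnish monotone invariants: any elementary embedding $M_\lambda \to M_\mu$ must produce an injection on indices witnessing that $\lambda$ is dominated by $\mu$, and an antichain of $\lambda$'s then rules out embeddings in either direction.

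Fix a base $N_0 \models T$ of cardinality $|T|$ containing $\bigcup_{i < \omega} f^i(A)$, so that each $q_i$ is over a subset of $N_0$. For each $\lambda$, pick Morley sequences $I_i^\lambda$ in $q_i | N_0$ of length $\lambda(i)$; independence of $\bigcup_i I_i^\lambda$ over $N_0$ is automatic by pairwise orthogonality of the $q_i$'s. Let $M_\lambda$ be the $a$-prime model over $N_0 \cup \bigcup_i I_i^\lambda$. Standard properties of $a$-prime models over independent bases give $\dim(q_i, M_\lambda) = \lambda(i)$, and moreover any regular type $r$ realized in $M_\lambda$ with $\dim(r, M_\lambda) > |T|$ must be non-orthogonal to some $q_i$, since otherwise all realizations of $r$ would be dominated already by $N_0$ and the dimension would be bounded by $|N_0| = |T|$.

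The key lemma is that any elementary embedding $g : M_\lambda \to M_\mu$ (with $\lambda, \mu$ taking values greater than $|T|$) induces an injection $\pi : \omega \to \omega$ such that $\lambda(i) \leq \mu(\pi(i))$ for every $i$. Extend $g$ to $\hat g \in \Aut(\mathfrak{C})$. For each $i$, the $\lambda(i)$ independent realizations of $q_i$ in $M_\lambda$ are mapped to $\lambda(i)$ independent realizations of $\hat g(q_i)$ in $g(M_\lambda) \subseteq M_\mu$, so $\dim(\hat g(q_i), M_\mu) \geq \lambda(i) > |T|$; by the previous paragraph, $\hat g(q_i) \not\perp q_j$ for some $j$, and this $j$ is unique by pairwise orthogonality of the family. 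Set $\pi(i) := j$. Injectivity of $\pi$ follows from transitivity of non-orthogonality among regular types: if $\pi(i_1) = \pi(i_2)$, then $\hat g(q_{i_1}) \not\perp \hat g(q_{i_2})$, hence $q_{i_1} \not\perp q_{i_2}$ after applying $\hat g^{-1}$. The inequality $\lambda(i) \leq \mu(\pi(i))$ follows because in the $a$-model $M_\mu$, the dimension of a regular type is determined by its non-orthogonality class, so $\lambda(i) \leq \dim(\hat g(q_i), M_\mu) = \dim(q_{\pi(i)}, M_\mu) = \mu(\pi(i))$.

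Finally, I would exhibit an antichain $\{\lambda_n : n < \omega\}$ in the quasi-order $\lambda \leq_\star \mu \iff$ there exists an injection $\pi : \omega \to \omega$ with $\lambda(i) \leq \mu(\pi(i))$ for all $i$. This reduces to producing an antichain of strictly increasing sequences in $\omega$ under componentwise $\leq$: with $S_n := \{0, 1, \ldots, n-1\} \cup \{2n, 2n+1, \ldots\}$ and its increasing enumeration $(s^n_i)_{i < \omega}$, set $\lambda_n(i) := \aleph_{s^n_i + 1}$; a direct computation shows that for $n \neq m$ the sequences $(s^n_i)$ and $(s^m_i)$ are incomparable componentwise, forcing the $\lambda_n$'s to be $\leq_\star$-incomparable. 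By the key lemma, no elementary embedding exists in either direction between $M_{\lambda_n}$ and $M_{\lambda_m}$ for $n \neq m$, so the family $\{M_{\lambda_n} : n < \omega\}$ is pairwise not bi-embeddable, and a fortiori pairwise non-isomorphic. The main obstacle I expect is the domination-theoretic claim in the second paragraph that large-dimensional regular types realized in the $a$-prime model $M_\mu$ must be non-orthogonal to some $q_j$; this is where the careful machinery of $a$-prime models over independent bases in general stable theories is essential.
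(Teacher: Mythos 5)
Your proposal proves a different conclusion from the one the theorem actually needs and that the paper actually proves. As Theorem~\ref{nomadic_types} is used in the proof of Theorem~\ref{wm_SB} (to refute condition~3) and as the paper's own argument makes explicit, the point is to produce infinitely many models that \emph{are} pairwise elementarily bi-embeddable yet pairwise nonisomorphic; the word ``not'' in the stated conclusion is a slip. Correspondingly, the automorphism $f$ is not merely a device for generating the pairwise orthogonal family $\{f^i(p)\}$: it is what furnishes the mutual embeddings. The paper takes $M_j$ to be $a$-prime over $B \cup \bigcup_i I^j_i$ with $|I^j_i| = \aleph_{i+j+1}$, so that applying powers of $f$ shifts the dimension pattern and yields embeddings in both directions between any $M_j$ and $M_k$, and the real work is a dimension dichotomy (every regular type over a countable subset of $M_j$ has dimension $\leq \aleph_0$ or $\geq \aleph_{j+1}$, and $\dim(p, M_j) = \aleph_{j+1}$ exactly) which makes the models nonisomorphic \emph{despite} being bi-embeddable. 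Your construction never builds any embeddings and never uses $f$ beyond naming the types $q_i$, so it cannot serve the role the theorem plays in the paper.

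Even read as a proof of the literal ``pairwise not bi-embeddable'' statement, the final combinatorial step fails. Your key lemma only provides an arbitrary injection $\pi$ with $\lambda(i) \leq \mu(\pi(i))$, so you need the $\lambda_n$'s to be incomparable in that quasi-order, not merely componentwise incomparable; the claimed reduction to componentwise $\leq$ is unjustified. In fact, for strictly increasing unbounded sequences $s, t$ in $\omega$ one can always choose such an injection greedily, setting $\pi(i+1) = \max\bigl(\pi(i)+1, \min\{ j : t_j \geq s_{i+1}\}\bigr)$, so any two of your $\lambda_n$'s are $\leq_\star$-comparable in both directions and your family rules out nothing; comparability under $\leq_\star$ is governed by the counting functions $\kappa \mapsto |\{ i : \lambda(i) \geq \kappa\}|$, so unbounded dimension patterns can never yield incomparability. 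There are also smaller gaps: the exact transfer $\dim(\hat g(q_i), M_\mu) = \dim(q_{\pi(i)}, M_\mu) = \mu(\pi(i))$ is not literally true for regular types over different small bases --- one needs the additivity argument over an intermediate $a$-prime model, as in the paper's claims, which is also what gives the upper bound $\dim(q_i, M_\lambda) \leq \lambda(i)$ rather than just the lower bound --- and for stable, non-superstable $T$ one must work with $\textbf{F}^a_\kappa$-prime models rather than $\textbf{F}^a_{\aleph_0}$-prime ones, as the paper notes.
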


\begin{proof}
Just for simplicity of notation, we will assume that $T$ is countable and superstable, but the same argument works in general if we lengthen our Morley sequences a bit and replace the $\textbf{F}^a_{\aleph_0}$-prime models by $\textbf{F}^a_\kappa$-prime models for some suitably large $\kappa$.  We will use without proof some well-known facts about $a$-prime models which are proved in section~1.4 of \cite{pillay} and in chapter~IV of \cite{bible} (in the latter reference, they are called ``$\textbf{F}^a_{\aleph_0}$-prime models'').

Pick $p$ and $f$ as in the hypothesis.  Since $T$ is superstable, we may assume that $A$ is countable.  Let $A_i = f^i(p)$ and $p_i = f^i(p)$ (which is in $S(A_i)$), and let $B = \bigcup_{i < \omega} A_i$.  Pick sequences $\langle I^j_i : i, j < \omega \rangle$ such that $I^j_i$ is a Morley sequence in $p_i | B$ of length $\aleph_{i + j + 1}$ and $$I^j_i \ind_B \bigcup_{k \neq i} I^j_k .$$  For each $j < \omega,$ let $M_j$ be an $a$-prime model over $B \cup \bigcup_{i < \omega} I^j_i$.

By using $a$-primeness and iterating the map $f$, it is straightforward to check that the models $\langle M_j : j < \omega \rangle$ are pairwise bi-embeddable.  To prove they are nonisomorphic, we set some notation.  If $p \in S(C)$ is a regular type and $C \subseteq M \models T$, then $\dim(p, M)$ is the cardinality of a maximal Morley sequence in $p$ inside $M$; as noted in section~1.4.5 of \cite{pillay}, this is well-defined for any model $M$.  

\begin{claim}
\label{dimension_bound}
If $q \in S(C)$ is any regular type and $M$ is an $a$-prime model over $C$, then $\dim(q,M) \leq \aleph_0$.
\end{claim}

\begin{proof}
This is a special case of Theorem~IV.4.9(5) from \cite{bible}.
\end{proof}

\begin{claim}
If $q \in S(C)$ is any regular stationary type over a countable set $C \subseteq M_j$, then either $\dim(q, M_j) \leq \aleph_0$ or $\dim(q, M_j) \geq \aleph_{j+1}$.
\end{claim}

\begin{proof}

Case 1: For some $i < \omega$, $q \not\perp p_i$.

Pick some $N$ which is $a$-prime over $B \cup C$, and by primeness we may assume $N \prec M_j$.  Since $N$ is an $a$-model, $q | N$ is domination equivalent to $p_i | N$, and so $\dim(q | N, M_j) = \dim(p_i | N, M_j)$.  If $J \subseteq N$ is a maximal Morley sequence in $p_i$, then each $c \in J$ forks with $C$ over $B$, so $|J| \leq \aleph_0$; therefore $\dim(p_i, N) \leq \aleph_0$.  By Lemma~1.4.5.10 of \cite{pillay}, $$\dim(p_i, M_j) = \dim(p_i, N) + \dim(p_i | N, M_j);$$ so since $\dim(p_i, M_j) \geq \aleph_{i+j+1}$, we must have $\dim(p_i | N, M_j) \geq \aleph_{i+j+1}$.  So $\dim(q, M_j) \geq \dim(q | N, M_j) = \dim(p_i | N, M_j) \geq \aleph_{j+1}$. 

Case 2: For every $i < \omega$, $q \perp p_i$.

Pick some finite $D \subseteq C$ such that $q$ does not fork over $D$.  If $J \subseteq M_j$ is a maximal Morley sequence in $q$, then by standard forking calculus we can find a subset $\widetilde{J}$ of $J$ such that $\widetilde{J} \ind_D B$, $\widetilde{J} \ind_{BD} I^j_{<\omega},$ and $|J \setminus \widetilde{J} | \leq \aleph_0$.  Since $D$ is finite, the model $M_j$ is $a$-prime over $B \cup D \cup I^j_{< \omega}$, and so by Claim~\ref{dimension_bound}, $|\widetilde{J}| \leq \aleph_0$.  Therefore $|J| \leq \aleph_0$ and we are done.


\end{proof}

If $j < k < \omega$, then the nonisomorphism of $M_j$ and $M_k$ follows from the previous claim plus:

\begin{claim}
$\dim(p, M_j) = \aleph_{j+1}$.
\end{claim}

\begin{proof}
Suppose $J \subseteq M_j$ is a maximal Morley sequence in $p$.  Without loss of generality, $J \supseteq I^j_0$, so $|J| \geq \aleph_{j+1}$.  Since $B$ is countable, there is a countable $J_0 \subseteq J$ such that $J_1 := J \setminus J_0$ is independent over $B$.  Since $p$ is orthogonal to every type $p_i$ for $i > 0$, $J_1$ is independent over $B \cup \bigcup_{0 < i < \omega} I^j_i$.  So if $J_2 = J_1 \setminus I^j_0$, then $J_2$ is Morley over $B \cup \bigcup_{i < \omega} I^j_i$.  But since $M_j$ is $a$-constructible over $B \cup \bigcup_{i < \omega} I^j_i$, it follows (by Claim~\ref{dimension_bound}) that $|J_2| \leq \aleph_0$, and thus $|J| = \aleph_{j+1}$.
\end{proof}

\end{proof}

\textit{Proof of Theorem~\ref{wm_SB}:}
2 $\Rightarrow$ 1 was Corollary~\ref{SB_dep_orbits}, and 1 $\Rightarrow$ 3 is immediate.  All that is left is to show that if 2 fails then so does 3.

So suppose $T$ has a minimal type $p \in S(\acl(\emptyset))$ with an independent orbit, and say $$p \, \bot^a \, \, f(p) \otimes f^2(p) \otimes \ldots$$ where $f \in \Aut(\mathfrak{C})$.  Then $p$ cannot be strongly minimal, so by Buechler's dichotomy, $p$ must be locally~modular.  

First suppose that $p$ is nontrivial.  If $c$ is any realization of $p$, then each of the types in $\{f^i(p) : 1 \leq i < \omega \}$ is non-almost-orthogonal over $c$ to a generic type $q \in S(\acl(\emptyset))$ of some weakly~minimal, $\acl(\emptyset)$-definable group $G$ (see \cite{almost_orthogonality} or \cite{pillay}).  By Fact~\ref{abelian_by_finite}, we may assume that $G$ is abelian.  We temporarily add constants to the language for the algebraic parameters used to define $G$ so that $G$ is definable over $\emptyset$, and let $T'$ be this expanded language.  There must be some finite power $f^k$ of $f$ which fixes these parameters, so without loss of generality $f$ is still an automorphism in the language of $T'$.  It follows that $q$ also has an independent orbit, as witnessed by $f$ again, and so $\G$ has an everywhere generic automorphism.  By Theorem~\ref{wm_group_SB}, the theory $T'$ has infinitely many pairwise bi-embeddable, pairwise nonisomorphic models, and by Lemma~\ref{alg_parameters}, so does the original theory $T$.

Finally, suppose that $p$ is trivial.  Then the types in $\{f^i(p) : i < \omega\}$ are pairwise orthogonal (see \cite{trivial_considerations}).  By Theorem~\ref{nomadic_types}, we are done. $\square$

\begin{remark}
It seems that the Dushnik-Miller argument used in section~3 for weakly minimal groups could also be applied to weakly minimal theories in which there is a trivial type with an independent orbit, yielding a more uniform proof of Theorem~\ref{wm_SB} which avoids the construction in the proof of Theorem~\ref{nomadic_types}.  There are some technical obstacles to doing this, however, so we have not included this argument in the present write-up.
\end{remark}


\begin{thebibliography}{9}


\bibitem{trivial_considerations} John B. Goode, ``Some trivial considerations," \emph{Journal of Symbolic Logic} \textbf{56} (1991), number~2, 624--631.

\bibitem{dushnik_miller} Ben Dushnik and E.\ W.\ Miller, ``Partially ordered sets," \emph{American Journal of Mathematics} \textbf{63} (1941), number~3, 600--610.

\bibitem{my_thesis} John Goodrick, ``When are elementarily bi-embeddable structures isomorphic?," Ph.\ D.\ thesis, University of California, Berkeley, 2007.

\bibitem{almost_orthogonality} Ehud Hrushovski, ``Almost orthogonal regular types," \emph{Annals of Pure and Applied Logic} \textbf{45} (1939), 139--155.

\bibitem{jech} Thomas Jech, \emph{Set Theory}, Academic Press, 1978.

\bibitem{nur1} T.\ A.\ Nurmagambetov, ``Characterization of $\omega$-stable theories with a bounded number of dimensions," \emph{Algebra i Logika} \textbf{28} (1989), number~5, 388--396.

\bibitem{nur2} T.\ A.\ Nurmagambetov, ``The mutual embeddability of models," from \emph{Theory of Algebraic Structures} (in Russian), Karagand.\ Gos.\ Univ.\, 1985, pp.\ 109--115.

\bibitem{pillay} Anand Pillay, \emph{Geometric Stability Theory}, Oxford University Press, 1996.

\bibitem{bible} Saharon Shelah, \emph{Classification Theory} (2nd edition), North-Holland (1990).


\end{thebibliography}
\end{document}